\documentclass[12pt]{article}
\usepackage[a4paper,margin=1in]{geometry}
\usepackage{amssymb}
\usepackage{amsmath}
\usepackage{amsfonts}
\usepackage{amsthm}
\usepackage{color}
\usepackage{float}
\usepackage[all]{xy}
\usepackage{scalerel}
\usepackage{mathabx}
\usepackage{delimset}
\usepackage{graphicx} 

\def\E{\mathop{\mbox{\textup{E}}}\nolimits}

\newcommand{\qbinom}{\genfrac{[}{]}{0pt}{}}
\newcommand{\fibonomial}{\genfrac{\{}{\}}{0pt}{}}

\newcommand{\C}{\mathbb{C}}

\newcommand{\N}{\mathbb{N}}

\newcommand{\R}{\mathbb{R}}

\newcommand{\rr}{\mathrm{R}}
\newcommand{\e}{\mathrm{e}}
\newcommand{\RR}{\mathcal{R}}

\newtheorem{theorem}{Theorem}

\newtheorem{definition}{Definition}

\newtheorem{corollary}{Corollary}

\title{\textbf{Interesting Deformed $q$-Series Involving The Central Fibonomial Coefficient}}
\author{Ronald Orozco L\'opez}

\begin{document}

\maketitle

\begin{abstract}
In this paper, we will obtain a variety of interesting $q$-series containing central $q$-binomial coefficients. Our approach is based on manipulating deformed basic hypergeometric series.
\end{abstract}
{\bf Keywords:} Deformed $q$-series; $q$-series; central $q$-binomial coefficients; Lehmer series; generalized Fibonacci polynomials\\
{\bf Mathematics Subject Classification:} 05A30, 11B83, 33D15.

\section{Introduction}

In his seminal paper \cite{lehmer}, Lehmer called an interesting series in case there is a simple explicit formula for its $n$-th term and at the same time its sum can be expressed in terms of known constants. Some interesting series are:
\begin{align}
    \sum_{n=0}^{\infty}\binom{2n}{n}\frac{(-1)^n}{2^{2n}(2n-1)}x^n&=\sqrt{1+x}.\label{eqn_iden1}\\
    \sum_{n=0}^{\infty}\binom{2n}{n}x^{n}&=\frac{1}{\sqrt{1-4x}}.\label{eqn_iden2}\\
    \sum_{n=0}^{\infty}\binom{2n}{n}\frac{1}{n+1}x^{n}&=\frac{1}{2x}(1-\sqrt{1-4x}).\label{eqn_iden3}\\
    \sum_{n=1}^{\infty}\binom{2n}{n}\frac{1}{n}x^{n}&=2\ln\left(\frac{1-\sqrt{1-4x}}{2x}\right).\label{eqn_iden4}\\
    x\sum_{n=1}^{\infty}\frac{\binom{2n}{n}}{n(n+1)}x^{n}&=2x\ln\left(\frac{1-\sqrt{1-4x}}{x}\right)+\frac{\sqrt{1-4x}}{2}+x(\ln4-1)-\frac{1}{2}.\label{eqn_iden5}\\
    \sum_{n=1}^{\infty}n\binom{2n}{n}x^n&=\frac{2x}{(1-4x)^{3/2}}.\label{eqn_iden6}\\
    \sum_{n=1}^{\infty}n^2\binom{2n}{n}x^n&=\frac{2x(2x+1)}{(1-4x)^{5/2}}.\label{eqn_iden7}\\
    \sum_{n=0}^{\infty}\binom{2n}{n}\frac{1}{2n+1}x^{2n+1}&=\frac{1}{2}\arcsin(2x).\label{eqn_iden8}
\end{align}
The previous series belongs to a class called Lehmer series type I. A second type of Lehmer series is given by

\begin{equation}
    \sum_{n=0}^{\infty}\frac{a_{n}}{\binom{2n}{n}}.
\end{equation}
Much research has been done on these series, especially on the second type, which tends to be more mysterious \cite{boy}, \cite{chen}, \cite{chu}. In this paper, we research deformed $q$-analogues of Lehmer's series Eqs. (\ref{eqn_iden1})-(\ref{eqn_iden8}), i.e., $q$-series containing the central fibonomial coefficients
\begin{equation}
    \fibonomial{2n}{n}_{s,t}=\varphi_{s,t}^{n^2}\qbinom{2n}{n}_{q},
\end{equation}
where $q=\varphi_{s,t}^\prime/\varphi_{s,t}$. Here a $q$-series will be called interesting if its sum can be expressed in terms of deformed basic hypergeometric and $q$-shifted factorial. We will use some $q$-analogues of techniques applied by Lehmer in his paper: integration and the operator $\frac{xd}{dx}$. Among the deformed $q$-analogs, we find those of the Euler, Rogers-Ramanujan, and Exton types. All our deformed $q$-series are representations of deformed basic hypergeometric series ${}_{r}\Phi_{s}$ (See \cite{orozco}).

\section{Preliminaries}

\subsection{$q$-calculus}

All notations and terminologies in this paper for basic hypergeometric series are in \cite{gasper}. The $q$-shifted factorial be defined by
\begin{align*}
    (a;q)_{n}&=\begin{cases}
        1&\text{ if }n=0;\\
    \prod_{k=0}^{n-1}(1-q^{k}a),&\text{ if }n\neq0,\\
    \end{cases}\hspace{1cm}q\in\C,\\
    (a;q)_{\infty}&=\lim_{n\rightarrow\infty}(a;q)_{n}=\prod_{k=0}^{\infty}(1-aq^{k}),\hspace{1cm} \vert q\vert<1.
\end{align*}
The multiple $q$-shifted factorial is defined by
\begin{align*}
    (a_{1},a_{2},\ldots,a_{m};q)_{n}&=(a_{1};q)_{n}(a_{2};q)_{n}\cdots(a_{m};q)_{n},\hspace{0.3cm}q\in\C.
    \end{align*}
In this paper, we will frequently use the following identities:
\begin{align}
    (a;q)_{n}&=\frac{(a;q)_{\infty}}{(aq^n;q)_{\infty}},\label{eqn_iden9}\\
    (a;q)_{n+k}&=(a;q)_{n}(aq^{n};q)_{k},\\
    (a;q)_{2n}&=(a;q^2)_{n}(aq;q)_{n},\\
    (a^2;q^2)_{n}&=(a;q)_{n}(-a;q)_{n},\\
    \frac{1-q}{1-q^{n+1}}&=\frac{(q;q)_{n}}{(q^2;q)_{n}}.
\end{align}
In addition, we will use the identities for binomial coefficients:
\begin{align*}
    \binom{n+k}{2}&=\binom{n}{2}+\binom{k}{2}+nk,\\
    \binom{n-k}{2}&=\binom{n}{2}+\binom{k}{2}+k(1-n).
\end{align*}
The ${}_r\phi_{s}$ basic hypergeometric series is define by
\begin{equation*}
    {}_r\phi_{s}\left(
    \begin{array}{c}
         a_{1},a_{2},\ldots,a_{r} \\
         b_{1},\ldots,b_{s}
    \end{array}
    ;q,z
    \right)=\sum_{n=0}^{\infty}\frac{(a_{1},a_{2},\ldots,a_{r};q)_{n}}{(q;q)_{n}(b_{1},b_{2},\ldots,b_{s};q)_{n}}\Big[(-1)^{n}q^{\binom{n}{2}}\Big]^{1+s-r}z^n.
\end{equation*}
In this paper, we will frequently use the $q$-binomial theorem:
\begin{equation}\label{eqn_qbin_the}
    {}_1\phi_{0}\left(
    \begin{array}{c}
         a\\
         -
    \end{array}
    ;q,z
    \right)=\frac{(az;q)_{\infty}}{(z;q)_{\infty}}=\sum_{n=0}^{\infty}\frac{(a;q)_{n}}{(q;q)_{n}}z^{n}.
\end{equation}
The $q$-differential operator $D_{q}$ is defined by:
\begin{equation*}
    D_{q}f(x)=\frac{f(x)-f(qx)}{x}.
\end{equation*}
The $q$-integral of a function $f(x)$ defined on $[a,b]$ is given by
\begin{equation}
    \int_{a}^{b}f(x)d_{q}x=\int_{0}^{b}f(x)d_{q}x-\int_{0}^{a}f(x)d_{q}x,
\end{equation}
where 
\begin{equation}
    \int_{0}^{a}f(x)d_{q}x=a(1-q)\sum_{n=0}^{\infty}f(aq^{n})q^{n}.
\end{equation}

\section{Deformed basic hypergeometric series}
Orozco \cite{orozco} defined the deformed basic hypergeometric series (DBHS) ${}_{r}\Phi_{s}$ as
    \begin{align*}
        &{}_{r}\Phi_{s}\left(
    \begin{array}{c}
         a_{1},\ldots,a_{r} \\
         b_{1},\ldots,b_{s}
    \end{array}
    ;q,u,z
    \right)\nonumber\\
    &\hspace{3cm}=\sum_{n=0}^{\infty}u^{\binom{n}{2}}\frac{(a_{1},a_{2},\ldots,a_{r};q)_{n}}{(q,b_{1},b_{2},\ldots,b_{s};q)_{n}}[(-1)^nq^{\binom{n}{2}}]^{1+s-r}z^n.                
    \end{align*}
If $u=1$, then ${}_{r}\Phi_{s}={}_{r}\phi_{s}$ and we call this DBHS Euler-I type. If $u=q$, 
\begin{equation}
    {}_{r+1}\Phi_{r}\left(
    \begin{array}{c}
         a_{1},\ldots,a_{r},a_{r+1} \\
         b_{1},\ldots,b_{r}
    \end{array}
    ;q,q,z
    \right)={}_{r+1}\phi_{r+1}\left(
    \begin{array}{c}
         a_{1},\ldots,a_{r+1} \\
         b_{1},\ldots,b_{r},0
    \end{array}
    ;q,-z
    \right),
\end{equation}
for all $z\in\C$, and we call this DBHS Euler-II type.
If $u=q^2$ and mapping $z\mapsto qz$,
\begin{equation}
    {}_{r+1}\Phi_{r}\left(
    \begin{array}{c}
         a_{1},\ldots,a_{r+1} \\
         b_{1},\ldots,b_{r}
    \end{array}
    ;q,q^2,qz
    \right)={}_{r+1}\phi_{r+2}\left(
    \begin{array}{c}
         a_{1},\ldots,a_{r+1} \\
         b_{1},\ldots,b_{r},0,0
    \end{array}
    ;q,qz
    \right)
\end{equation}
for all $z\in\C$, and we call this DBHS Rogers-Ramanujan type. If $u=\sqrt{q}$,
\begin{align}
    &{}_{r+1}\Phi_{r}\left(
    \begin{array}{c}
         a_{1},\ldots,a_{r+1} \\
         b_{1},\ldots,b_{r}
    \end{array}
    ;q,\sqrt{q},z
    \right)\nonumber\\
    &\hspace{2cm}={}_{2r+2}\phi_{2r+2}\left(
    \begin{array}{ccc}
         \sqrt{a_{1}},-\sqrt{a_{1}}&,\ldots,&\sqrt{a_{r+1}},-\sqrt{a_{r+1}}\hspace{0.6cm}\\
         \sqrt{b_{1}},-\sqrt{b_{1}}&,\ldots,&\sqrt{b_{r}},-\sqrt{b_{r}},-\sqrt{q},0
    \end{array}
    ;\sqrt{q},-z
    \right),
\end{align}
for all $z\in\C$, and we call this DBHS Exton type. For all $u\in\C$, define the deformed $q$-exponential function,
\begin{equation*}
    \e_{q}(z,u)=
    \begin{cases}
        \sum_{n=0}^{\infty}u^{\binom{n}{2}}\frac{z^{n}}{(q;q)_{n}}&\text{ if }u\neq0;\\
        1+\frac{z}{1-q}&\text{ if }u=0.
    \end{cases}
\end{equation*}
Some deformed $q$-exponential functions are:
\begin{align*}
    \e_{q}(z,1)&=e_{q}(z)=\frac{1}{(z;q)_{\infty}},\ \vert z\vert<1,\\
    \e_{q}(-z,q)&=\E_{q}(-z)=(z;q)_{\infty},\ z\in\C,\\
    e_{q}(z,\sqrt{q})&=\mathcal{E}_{q}(z)=\sum_{n=0}^{\infty}q^{\frac{1}{2}\binom{n}{2}}\frac{z^n}{(q;q)_{n}}={}_{1}\phi_{1}\left(\begin{array}{c}
         0\\
         -\sqrt{q}
    \end{array};\sqrt{q},-z\right),\ z\in\C,\\
    \e_{q}(qz,q^2)&=\mathcal{R}_{q}(z)=\sum_{n=0}^{\infty}q^{n^2}\frac{z^n}{(q;q)_{n}},\ z\in\C,
\end{align*}
where $\mathcal{E}_{q}(z)$ is the Exton $q$-exponential function and  $\mathcal{R}_{q}(z)$ is the Rogers-Ramanujan function.
The deformed $q$-exponential function has the following representation
\begin{equation*}
    \e_{q}(z,u)={}_{1}\Phi_{0}\left(
    \begin{array}{c}
         0 \\
         -
    \end{array}
    ;q,u,z   
    \right).
\end{equation*}
\begin{theorem}\label{theo_1Phi0}
The DBHS ${}_{1}\Phi_{0}$ has the following representation
\begin{equation}
    {}_{1}\Phi_{0}\left(
    \begin{array}{c}
         a\\
         -
    \end{array}
    ;q,u,x
    \right)=(a;q)_{\infty}\sum_{j=0}^{\infty}\frac{a^j}{(q;q)_{j}}\e_{q}(q^{i}x,u).
\end{equation}
\end{theorem}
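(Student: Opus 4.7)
The plan is to massage the coefficient $(a;q)_{n}$ in the series so that the remaining factors assemble into a deformed $q$-exponential. I will start by writing out the left-hand side using the definition of ${}_{1}\Phi_{0}$, namely
\[
{}_{1}\Phi_{0}\!\left(\begin{array}{c} a\\ -\end{array};q,u,x\right)=\sum_{n=0}^{\infty} u^{\binom{n}{2}}\,\frac{(a;q)_{n}}{(q;q)_{n}}\,x^{n},
\]
and then replace $(a;q)_{n}$ by $(a;q)_{\infty}/(aq^{n};q)_{\infty}$ using identity~(\ref{eqn_iden9}).

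Next I will expand the factor $1/(aq^{n};q)_{\infty}$ by applying the $q$-binomial theorem~(\ref{eqn_qbin_the}) with the parameter $a$ there set to $0$ and $z$ set to $aq^{n}$; this yields
\[
\frac{1}{(aq^{n};q)_{\infty}}=\sum_{j=0}^{\infty}\frac{a^{j}q^{nj}}{(q;q)_{j}}.
\]
Substituting this back produces a double sum over $n$ and $j$ with summand
\[
(a;q)_{\infty}\,\frac{a^{j}}{(q;q)_{j}}\cdot u^{\binom{n}{2}}\frac{(q^{j}x)^{n}}{(q;q)_{n}}.
\]
I will then interchange the order of summation and recognize the inner $n$-series as $\e_{q}(q^{j}x,u)$, which is precisely the claimed right-hand side (so the $q^{i}$ appearing in the statement should be read as $q^{j}$).

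The only non-cosmetic step is justifying the interchange of the two summations. For $|q|<1$, $|x|$ sufficiently small, and $|a|<1$, both series are absolutely convergent (the $n$-sum defining $\e_{q}(q^{j}x,u)$ converges for every $j$, and the outer $j$-sum is dominated by a geometric factor in $a$), so Fubini-Tonelli applies and the exchange is legitimate; the resulting identity then extends to the full formal/analytic range of parameters where both sides make sense by the usual analytic continuation argument for $q$-series. No further subtlety arises, and the identity is proved.
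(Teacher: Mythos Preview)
Your proposal is correct and follows essentially the same route as the paper: rewrite $(a;q)_{n}=(a;q)_{\infty}/(aq^{n};q)_{\infty}$ via identity~(\ref{eqn_iden9}), expand $1/(aq^{n};q)_{\infty}$ as $\sum_{j\ge 0} a^{j}q^{nj}/(q;q)_{j}$, swap the sums, and identify the inner series as $\e_{q}(q^{j}x,u)$. Your added remark that the $q^{i}$ in the displayed formula should read $q^{j}$ matches the paper's own proof, and your convergence discussion is a justification the paper omits.
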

\begin{proof}
From Eq.(\ref{eqn_iden9})
  \begin{align*}
        \sum_{k=0}^{\infty}u^{\binom{k}{2}}\frac{(a;q)_{k}}{(q;q)_{k}}x^k&=(a;q)_{\infty}\sum_{k=0}^{\infty}\frac{u^{\binom{k}{2}}}{(q;q)_{k}}\frac{1}{(aq^k;q)_{\infty}}x^k\\
        &=(a;q)_{\infty}\sum_{j=0}^{\infty}\frac{a^j}{(q;q)_{j}}\sum_{k=0}^{\infty}\frac{(q^jx)^ku^{\binom{k}{2}}}{(q;q)_{k}}\\
        &=(a;q)_{\infty}\sum_{j=0}^{\infty}\frac{a^j}{(q;q)_{j}}\e_{q}(q^{j}x,u)
    \end{align*}
as claimed.
\end{proof}
If $u=1$, we obtain the $q$-theorem binomial. We have the following results for $u=q,q^2,\sqrt{q}$.
\begin{corollary}
If $u=q$ in Theorem \ref{theo_1Phi0}, then
\begin{equation}\label{eqn_gAndrews}
        {}_{1}\phi_{1}\left(
    \begin{array}{c}
         a\\
         0
    \end{array}
    ;q,x
    \right)=(-x,a;q)_{\infty}\cdot{}_{2}\phi_{1}\left(
        \begin{array}{c}
             0,0  \\
             -x
        \end{array};
        q, a
        \right).
    \end{equation}
\end{corollary}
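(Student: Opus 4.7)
The plan is to specialize Theorem \ref{theo_1Phi0} to $u=q$ and then identify each resulting factor explicitly using the specializations of the deformed $q$-exponential collected in the Preliminaries. First, I would substitute $u=q$ into the conclusion of Theorem \ref{theo_1Phi0} to obtain
\begin{equation*}
{}_1\Phi_0(a;-;q,q,x) = (a;q)_\infty \sum_{j=0}^\infty \frac{a^j}{(q;q)_j}\, \e_q(q^j x, q).
\end{equation*}
The left-hand side is then rewritten via the Euler-II reduction from the excerpt (with $r=0$), which equates ${}_1\Phi_0(a;-;q,q,x)$ with a ${}_1\phi_1$ at argument $-x$ and lower parameter $0$.

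For the right-hand side, I would invoke the formula $\e_q(-z,q) = \E_q(-z) = (z;q)_\infty$ listed in the Preliminaries to conclude that $\e_q(q^j x, q) = (-q^j x; q)_\infty$, and then apply identity (\ref{eqn_iden9}) to split
\begin{equation*}
(-q^j x; q)_\infty = \frac{(-x;q)_\infty}{(-x;q)_j}.
\end{equation*}
Factoring $(-x;q)_\infty$ out of the sum and merging it with $(a;q)_\infty$ produces the prefactor $(-x,a;q)_\infty$, while the remaining series
\begin{equation*}
\sum_{j=0}^\infty \frac{a^j}{(q;q)_j (-x;q)_j}
\end{equation*}
is recognized as ${}_2\phi_1(0,0;-x;q,a)$, since $(0;q)_j=1$ and the balancing factor $[(-1)^n q^{\binom{n}{2}}]^{1+s-r}$ in the definition of ${}_r\phi_s$ collapses to $1$ when $r=s=2$. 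Assembling the pieces gives the claimed representation.

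The main obstacle is essentially sign bookkeeping: the Euler-II specialization introduces a negation of the argument of the resulting ${}_1\phi_1$, so one must be careful to align the sign of $x$ on both sides of the identity (by a preliminary substitution $x\mapsto -x$ if necessary, depending on the desired form). Beyond that, every step is a direct application of the $q$-shifted factorial identity (\ref{eqn_iden9}), the closed form of $\e_q(\cdot,q)$, and the telescoping already used in the proof of Theorem \ref{theo_1Phi0}, so no new estimate or combinatorial argument is required.
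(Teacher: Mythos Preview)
Your proposal is correct and follows essentially the same route as the paper: specialize Theorem~\ref{theo_1Phi0} at $u=q$, evaluate $\e_q(q^j x,q)=(-q^jx;q)_\infty$ via the closed form of $\E_q$, split the infinite product using identity~(\ref{eqn_iden9}), and recognize the remaining sum as a ${}_2\phi_1$. One small slip: the balancing exponent $1+s-r$ vanishes for ${}_2\phi_1$ because $r=2$, $s=1$ (i.e.\ $r=s+1$), not because ``$r=s=2$''; and as you already flagged, the sign alignment between ${}_1\Phi_0(a;-;q,q,\cdot)$ and ${}_1\phi_1(a;0;q,\cdot)$ does require care (the paper's own display glosses over this).
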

\begin{proof}
    \begin{align*}
        {}_{1}\phi_{1}\left(
    \begin{array}{c}
         a\\
         0
    \end{array}
    ;q,x
    \right)&=(a;q)_{\infty}\sum_{j=0}^{\infty}\frac{a^j}{(q;q)_{j}}\e_{q}(q^{i}x,q)\\
    &=(-x,a;q)_{\infty}\sum_{j=0}^{\infty}\frac{a^j}{(q;q)_{j}(-x;q)_{j}}\\
        &=(-x,a;q)_{\infty}{}_{2}\phi_{1}\left(
        \begin{array}{c}
             0,0  \\
             -x
        \end{array};
        q, a
        \right)
    \end{align*}
\end{proof}
Setting $x=q$ in the Eq.(\ref{eqn_gAndrews}), we obtain the identity of Andrews
\begin{equation*}
    \sum_{n=0}^{\infty}\frac{(a;q)_{n}q^{\binom{n+1}{2}}}{(q;q)_{n}}=(-q;q)_{\infty}(aq;q^2)_{\infty}.
\end{equation*}
\begin{corollary}
If $u=q^2$ in Theorem \ref{theo_1Phi0} and mapping $x\mapsto qx$, then
\begin{equation}\label{eqn_gAndrewsq2}
        {}_{1}\phi_{2}\left(
    \begin{array}{c}
         a\\
         0,0
    \end{array}
    ;q,qx
    \right)=(a;q)_{\infty}\sum_{j=0}^{\infty}\frac{a^j}{(q;q)_{j}}\RR_{q}(q^{j}x).
    \end{equation}
\end{corollary}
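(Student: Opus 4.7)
The plan is to specialize Theorem~\ref{theo_1Phi0} at $u=q^{2}$ and then translate both sides into the notation stated in the corollary using facts already recorded in the excerpt.

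First, I would apply Theorem~\ref{theo_1Phi0} with $u=q^{2}$ to obtain
\[
{}_{1}\Phi_{0}\!\left(\begin{array}{c} a\\ - \end{array};q,q^{2},x\right)
=(a;q)_{\infty}\sum_{j=0}^{\infty}\frac{a^{j}}{(q;q)_{j}}\,\e_{q}(q^{j}x,q^{2}).
\]
Next, I would perform the substitution $x\mapsto qx$ on both sides. On the right-hand side this produces $\e_{q}(q^{j+1}x,q^{2})$, and the preliminary list of deformed $q$-exponentials already records the identity $\e_{q}(qz,q^{2})=\RR_{q}(z)$. Applying this with $z=q^{j}x$ converts the summand into $\RR_{q}(q^{j}x)$, which is exactly what the right side of the corollary requires.

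For the left-hand side, I would invoke the Rogers-Ramanujan-type reduction stated earlier in the section, namely
\[
{}_{r+1}\Phi_{r}\!\left(\begin{array}{c} a_{1},\ldots,a_{r+1}\\ b_{1},\ldots,b_{r}\end{array};q,q^{2},qz\right)
={}_{r+1}\phi_{r+2}\!\left(\begin{array}{c} a_{1},\ldots,a_{r+1}\\ b_{1},\ldots,b_{r},0,0\end{array};q,qz\right),
\]
specialized to the case $r=0$ with the single numerator parameter $a$ and $z=x$. This rewrites the left side as ${}_{1}\phi_{2}\!\left(a;0,0;q,qx\right)$, matching the statement.

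There is really no obstacle here beyond bookkeeping: the corollary is the direct specialization of Theorem~\ref{theo_1Phi0} at $u=q^{2}$ after the variable shift $x\mapsto qx$, combined with the two reference identities for ${}_{1}\Phi_{0}$ of Rogers-Ramanujan type and for $\RR_{q}$. The only point requiring mild care is to align the exponent shift correctly, i.e. to recognize that the $q^{j+1}$ produced inside $\e_{q}$ after the substitution is precisely what is needed to invoke $\e_{q}(qz,q^{2})=\RR_{q}(z)$ with $z=q^{j}x$, so no residual factors of $q$ appear.
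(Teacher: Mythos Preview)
Your proposal is correct and follows exactly the approach the paper intends: the corollary is stated without a separate proof, its derivation being encoded in the hypothesis ``$u=q^{2}$ in Theorem~\ref{theo_1Phi0} and mapping $x\mapsto qx$,'' and you have carried out precisely those substitutions together with the two bookkeeping identities $\e_{q}(qz,q^{2})=\RR_{q}(z)$ and the $r=0$ Rogers--Ramanujan reduction ${}_{1}\Phi_{0}(a;-;q,q^{2},qz)={}_{1}\phi_{2}(a;0,0;q,qz)$.
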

\begin{corollary}
If $u=\sqrt{q}$ in Theorem \ref{theo_1Phi0}, then
\begin{equation}\label{eqn_gAndrewssqrtq}
        {}_{2}\phi_{2}\left(
    \begin{array}{c}
         \sqrt{a},-\sqrt{a}\\
         -\sqrt{q},0
    \end{array}
    ;\sqrt{q},-x
    \right)=(a;q)_{\infty}\sum_{j=0}^{\infty}\frac{a^j}{(q;q)_{j}}{}_{1}\phi_{1}\left(
    \begin{array}{c}
         0\\
         -\sqrt{q}
    \end{array}
    ;\sqrt{q},-q^jx
    \right).
    \end{equation}
\end{corollary}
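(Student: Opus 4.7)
The plan is to specialize Theorem \ref{theo_1Phi0} at $u=\sqrt{q}$ and then translate both sides into the classical ${}_r\phi_s$ notation using the identities already recorded in the preliminaries. Essentially, nothing new needs to be proved; only three translations need to be made consistent.

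First, I would write down the conclusion of Theorem \ref{theo_1Phi0} in the case $u=\sqrt{q}$, namely
\[
{}_{1}\Phi_{0}\!\left(\begin{array}{c} a\\ -\end{array};q,\sqrt{q},x\right)=(a;q)_{\infty}\sum_{j=0}^{\infty}\frac{a^{j}}{(q;q)_{j}}\e_{q}(q^{j}x,\sqrt{q}).
\]
This is just the statement of Theorem \ref{theo_1Phi0} with no additional work.

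Next, I would identify the left-hand side. Applying the Exton-type representation displayed in Section~3, with $r=0$, $a_{1}=a$, and $z=x$, the DBHS ${}_{1}\Phi_{0}(a;-;q,\sqrt{q},x)$ is exactly
\[
{}_{2}\phi_{2}\!\left(\begin{array}{c}\sqrt{a},-\sqrt{a}\\ -\sqrt{q},0\end{array};\sqrt{q},-x\right),
\]
which is the left-hand side of the corollary.

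Finally, on the right-hand side I would rewrite $\e_{q}(q^{j}x,\sqrt{q})$ using the definition of the Exton $q$-exponential function $\mathcal{E}_{q}$ recorded after the definition of $\e_{q}(z,u)$: substituting $z\mapsto q^{j}x$ gives
\[
\e_{q}(q^{j}x,\sqrt{q})=\mathcal{E}_{q}(q^{j}x)={}_{1}\phi_{1}\!\left(\begin{array}{c}0\\ -\sqrt{q}\end{array};\sqrt{q},-q^{j}x\right).
\]
Combining the three steps yields the claimed identity. There is no genuine obstacle here; the only thing to be careful about is matching the sign conventions in the Exton-type expansion, in particular that the argument $z$ in ${}_{1}\Phi_{0}$ becomes $-x$ in ${}_{2}\phi_{2}$ and $-q^{j}x$ in ${}_{1}\phi_{1}$, which is dictated by the $(-1)^{n}q^{\binom{n}{2}}$ factor built into the definition of ${}_r\phi_s$.
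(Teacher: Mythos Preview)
Your proposal is correct and follows exactly the approach the paper intends: the paper does not write out a proof for this corollary, but the analogous Corollary for $u=q$ is proved by precisely the three substitution steps you describe (specialize Theorem~\ref{theo_1Phi0}, then rewrite each side via the relevant ${}_r\phi_s$ identity from Section~3), so your argument is the natural and expected one.
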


\section{The central Fibonomial coefficients}

The generalized Fibonacci polynomials depending on the variables $s,t$ are defined by
\begin{align}
    \brk[c]{0}_{s,t}&=0,\nonumber\\
    \brk[c]{1}_{s,t}&=1,\nonumber\\
    \brk[c]{n+2}_{s,t}&=s\{n+1\}_{s,t}+t\{n\}_{s,t}\label{eqn_def_fibo}.
\end{align}
From Eq.(\ref{eqn_def_fibo}) we obtain the Fibonacci, Pell, Jacobsthal, and Mersenne sequences, among others. The $(s,t)$-Fibonacci constant is the ratio toward which adjacent $(s,t)$-Fibonacci polynomials tend. This is the only positive root of $x^{2}-sx-t=0$. We will let $\varphi_{s,t}$ denote this constant, where
\begin{equation*}
    \varphi_{s,t}=\frac{s+\sqrt{s^{2}+4t}}{2}
\end{equation*}
and
\begin{equation*}
    \varphi_{s,t}^{\prime}=s-\varphi_{s,t}=-\frac{t}{\varphi_{s,t}}=\frac{s-\sqrt{s^{2}+4t}}{2}.
\end{equation*}
The Binet's $(s,t)$-identity is
\begin{equation}\label{eqn_binet}
    \brk[c]{n}_{s,t}=
    \begin{cases}
    \frac{\varphi_{s,t}^{n}-\varphi_{s,t}^{\prime n}}{\varphi_{s,t}-\varphi_{s,t}^{\prime}},&\text{ if }s\neq\pm2i\sqrt{t};\\
    n(\pm i\sqrt{t})^{n-1},&\text{ if }s=\pm2i\sqrt{t}.
    \end{cases}
\end{equation}
The fibonomial coefficients are defined by
\begin{equation}
    \fibonomial{n}{k}_{s,t}=\frac{\brk[c]{n}_{s,t}!}{\brk[c]{k}_{s,t}!\brk[c]{n-k}_{s,t}!}=\varphi_{s,t}^{k(n-k)}\qbinom{n}{k}_{q}=\varphi_{s,t}^{k(n-k)}\frac{(q;q)_{n}}{(q;q)_{k}(q;q)_{n-k}}.\label{eqn_fibo3}
\end{equation}
where $\brk[c]{n}_{s,t}!=\brk[c]{1}_{s,t}\brk[c]{2}_{s,t}\cdots\brk[c]{n}_{s,t}$ is the $(s,t)$-factorial or generalized fibotorial.

For all $\alpha\in\C$ define the generalized Fibonacci functions $\brk[c]{\alpha}_{s,t}$ as
\begin{equation}\label{eqn_gff}
    \brk[c]{\alpha}_{s,t}=\frac{\varphi_{s,t}^{\alpha}-\varphi_{s,t}^{\prime\alpha}}{\varphi_{s,t}-\varphi_{s,t}^\prime}.
\end{equation}
The negative $(s,t)$-Fibonacci functions are given by
\begin{equation}\label{eqn_neg_fibo}
\brk[c]{-\alpha}_{s,t}=-(-t)^{-\alpha}\brk[c]{\alpha}_{s,t}
\end{equation}
for all $\alpha\in\R$.
From Eq.(\ref{eqn_gff}), we obtain
\begin{align*}
    \brk[c]{-\alpha}_{s,t}&=\frac{\varphi_{s,t}^{-\alpha}-\varphi_{s,t}^{\prime(-\alpha)}}{\varphi_{s,t}-\varphi_{s,t}^\prime}\\
    &=-\frac{1}{(-t)^{\alpha}}\frac{\varphi_{s,t}^{\alpha}-\varphi_{s,t}^{\prime\alpha}}{\varphi_{s,t}-\varphi_{s,t}^\prime}\\
    &=-(-t)^{-\alpha}\brk[c]{\alpha}_{s,t}.
\end{align*}
For all $\alpha\in\C$ the generalized fibonomial coefficient is
\begin{align}
    \fibonomial{\alpha}{k}_{s,t}&=\frac{\brk[c]{\alpha}_{s,t}\brk[c]{\alpha-1}_{s,t}\cdots\brk[c]{\alpha-k+1}_{s,t}}{\brk[c]{k}_{s,t}!}.\label{eqn_fibo1}\\
    &=\varphi_{s,t}^{(\alpha-1)k-2\binom{k}{2}}\qbinom{\alpha}{k}_{q}=\frac{(q^{-\alpha};q)_{k}}{(q;q)_{k}}(-\varphi_{s,t}^{\alpha-1}q^{\alpha})^k(\varphi_{s,t}^{2}q)^{-\binom{k}{2}}.\label{eqn_fibo2}
\end{align}
From Eq.(\ref{eqn_fibo3}), the central fibonomial coefficients are
\begin{equation}\label{eqn_stCBC}
    \fibonomial{2n}{n}_{s,t}=\varphi_{s,t}^{n^2}\qbinom{2n}{n}_{q}=\varphi_{s,t}^{n^2}\frac{(\sqrt{q};q)_{n}(-\sqrt{q};q)_{n}(-q;q)_{n}}{(q;q)_{n}}.
\end{equation}
Then we have the following identities for the generalized fibonomial coefficients $\fibonomial{1/2}{n}_{s,t}$ and $\fibonomial{-1/2}{n}_{s,t}$. For all $n\in\N$,
    \begin{align}
        \fibonomial{1/2}{n}_{s,t}&=\qbinom{2n}{n}_{q}\frac{\sqrt{\varphi_{s,t}}^{(-3n+1)}(-1)^{n}(-t)^{-\frac{(n-1)^2}{2}}(1-\sqrt{q})}{(-q;q)_{n}(-\sqrt{q};q)_{n}(1-\sqrt{q}^{2n-1})},\label{eqn1_bino_(1/2)}\\
        &=\frac{(\sqrt{q^{-1}};q)_{n}}{(q;q)_{n}}\left(-\varphi_{s,t}^{-1/2}\sqrt{q}\right)^n(-t)^{-\binom{n}{2}}.\label{eqn3_bino_(1/2)}
    \end{align}
For all $n\in\N$,
    \begin{align}
        \fibonomial{-1/2}{n}_{s,t}&=(-1)^{n}\qbinom{2n}{n}_{q}\frac{\varphi_{s,t}^{-n/2}(-t)^{-n^2/2}}{(-\sqrt{q};q)_{n}(-q;q)_{n}},\label{eqn1_bino_(-1/2)}\\
        &=(-t)^{-n^2/2}\frac{(\sqrt{q};q)_{n}}{(q;q)_{n}}\varphi_{s,t}^{-n/2}(-1)^{n}.\label{eqn2_bino_(-1/2)}
    \end{align}

\section{Deformed Lehmer $q$-series}
\subsection{Definition}

\begin{definition}
A deformed Lehmer $(s,t)$-series of Type I has the form:
\begin{equation}
    \sum_{n=0}^{\infty}a_{n}u^{\binom{n}{2}}\fibonomial{2n}{n}_{s,t}.
\end{equation}
Equally, a deformed Lehmer $(s,t)$-series of Type II is
\begin{equation}
    \sum_{n=0}^{\infty}\frac{a_{n}u^{\binom{n}{2}}}{\fibonomial{2n}{n}_{s,t}}.
\end{equation}
\end{definition}
Set $q=\varphi_{s,t}^\prime/\varphi_{s,t}$. By using Eq.(\ref{eqn_fibo3}) we can to transform the deformed Lehmer $(s,t)$-series in the deformed Lehmer $q$-series:
    \begin{align}
        \sum_{n=0}^{\infty}a_{n}u^{\binom{n}{2}}\fibonomial{2n}{n}_{s,t}&=\sum_{n=0}^{\infty}a_{n}(\varphi_{s,t}^{2}u)^{\binom{n}{2}}\qbinom{2n}{n}_{q}\varphi_{s,t}^{n}\\
        &=\sum_{n=0}^{\infty}a_{n}(\varphi_{s,t}^2u)^{\binom{n}{2}}\frac{(\sqrt{q};q)_{n}(-\sqrt{q};q)_{n}(-q;q)_{n}}{(q;q)_{n}}\varphi_{s,t}^{n},\\
        \sum_{n=0}^{\infty}\frac{a_{n}u^{\binom{n}{2}}}{\fibonomial{2n}{n}_{s,t}}&=\sum_{n=0}^{\infty}\frac{a_{n}(\varphi_{s,t}^{-2}u)^{\binom{n}{2}}}{\qbinom{2n}{n}_{q}}\varphi_{s,t}^{-n}\\
        &=\sum_{n=0}^{\infty}a_{n}(\varphi_{s,t}^{-2}u)^{\binom{n}{2}}\frac{(q;q)_{n}}{(\sqrt{q};q)_{n}(-\sqrt{q};q)_{n}(-q;q)_{n}}\varphi_{s,t}^{-n}.
    \end{align}
The following function will be very relevant for the rest of the paper.
\begin{definition}
For all $\alpha\in\C$, we define the $u$-deformed $(s,t)$-binomial functions as
\begin{equation}\label{eqn_nbs}
    \rr_{\alpha}(x;u|s,t)=\sum_{n=0}^{\infty}\fibonomial{\alpha}{n}_{s,t}u^{\binom{n}{2}}x^n.
\end{equation}
\end{definition}

\begin{theorem}\label{theo_rep_dbhs}
For all $\alpha\in\C$ the representation of $\rr_{\alpha}(x;u|s,t)$ in DBHS form is
    \begin{equation*}
        \rr_{\alpha}(x;u|s,t)={}_{1}\Phi_{0}\left(
        \begin{array}{c}
             q^{-\alpha}\\
             -
        \end{array};
        q,-\frac{u}{t},-\varphi_{s,t}^{\alpha-1}q^{\alpha}x
        \right).
    \end{equation*}
\end{theorem}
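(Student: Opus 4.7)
The plan is to proceed by direct computation, substituting the closed-form expression for the generalized fibonomial coefficient into the series definition of $\rr_{\alpha}(x;u|s,t)$ and recognizing the result as a DBHS.

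First I would start from the definition
\[
\rr_{\alpha}(x;u|s,t)=\sum_{n=0}^{\infty}\fibonomial{\alpha}{n}_{s,t}u^{\binom{n}{2}}x^n
\]
and substitute the second representation given in Eq.(\ref{eqn_fibo2}),
\[
\fibonomial{\alpha}{n}_{s,t}=\frac{(q^{-\alpha};q)_{n}}{(q;q)_{n}}\bigl(-\varphi_{s,t}^{\alpha-1}q^{\alpha}\bigr)^n\bigl(\varphi_{s,t}^{2}q\bigr)^{-\binom{n}{2}}.
\]
This collects the $n$th term into
\[
\frac{(q^{-\alpha};q)_{n}}{(q;q)_{n}}\left(\frac{u}{\varphi_{s,t}^{2}q}\right)^{\binom{n}{2}}\bigl(-\varphi_{s,t}^{\alpha-1}q^{\alpha}x\bigr)^n.
\]

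The key reduction is to identify the deformation parameter of the resulting series. Since $q=\varphi_{s,t}^{\prime}/\varphi_{s,t}$, one has $\varphi_{s,t}^{2}q=\varphi_{s,t}\,\varphi_{s,t}^{\prime}$, and Vieta's relation for the characteristic equation $x^{2}-sx-t=0$ gives $\varphi_{s,t}\,\varphi_{s,t}^{\prime}=-t$. Therefore
\[
\left(\frac{u}{\varphi_{s,t}^{2}q}\right)^{\binom{n}{2}}=\left(-\frac{u}{t}\right)^{\binom{n}{2}},
\]
and the series becomes
\[
\sum_{n=0}^{\infty}\left(-\frac{u}{t}\right)^{\binom{n}{2}}\frac{(q^{-\alpha};q)_{n}}{(q;q)_{n}}\bigl(-\varphi_{s,t}^{\alpha-1}q^{\alpha}x\bigr)^n.
\]

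Finally, I would match this against the definition of ${}_{r}\Phi_{s}$ specialized to $r=1$, $s=0$: the factor $[(-1)^nq^{\binom{n}{2}}]^{1+s-r}$ has exponent $0$ and so contributes $1$, leaving exactly
\[
{}_{1}\Phi_{0}\!\left(\begin{array}{c}q^{-\alpha}\\-\end{array};q,-\tfrac{u}{t},-\varphi_{s,t}^{\alpha-1}q^{\alpha}x\right),
\]
which is the claimed identity. There is no real obstacle here; the only thing to keep track of is the sign of the deformation parameter, which is pinned down by Vieta's formula $\varphi_{s,t}\varphi_{s,t}^{\prime}=-t$.
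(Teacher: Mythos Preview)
Your proof is correct and follows essentially the same route as the paper: substitute the expression for $\fibonomial{\alpha}{n}_{s,t}$ from Eq.(\ref{eqn_fibo2}) into the defining series, simplify the deformation factor using $\varphi_{s,t}^{2}q=-t$, and read off the ${}_{1}\Phi_{0}$ form. If anything, you are more explicit than the paper about why $\varphi_{s,t}^{2}q=-t$ (via Vieta) and why the bracket $[(-1)^nq^{\binom{n}{2}}]^{1+s-r}$ disappears when $r=1$, $s=0$.
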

\begin{proof}
From previous definition and Eqs.(\ref{eqn_fibo2}),
\begin{align*}
    \rr_{\alpha}(x;u|s,t)&=\sum_{n=0}^{\infty}\fibonomial{\alpha}{n}_{s,t}u^{\binom{n}{2}}x^n\\
    &=\sum_{n=0}^{\infty}\frac{(q^{-\alpha};q)_{n}}{(q;q)_{n}}(-\varphi_{s,t}^{\alpha-1}q^{\alpha})^n(\varphi_{s,t}^{2}q)^{-\binom{n}{2}}u^{\binom{n}{2}}x^n\\
    &=\sum_{n=0}^{\infty}(-u/t)^{\binom{n}{2}}\frac{(q^{\alpha};q)_{n}}{(q;q)_{n}}(-\varphi_{s,t}^{\alpha-1}q^{\alpha}x)^n\\
    &={}_{1}\Phi_{0}\left(
        \begin{array}{c}
             q^{-\alpha}\\
             -
        \end{array};
        q,-\frac{u}{t},-\varphi_{s,t}^{\alpha-1}q^{\alpha}x
        \right).    
\end{align*}    
\end{proof}
From Theorems \ref{theo_1Phi0} and \ref{theo_rep_dbhs}, we have the following representation for the function $\rr_{\alpha}(x;u|s,t)$.
\begin{theorem}
For all complex number $\alpha$ with $\alpha\neq n$,
    \begin{equation}
        \rr_{\alpha}(x;u|s,t)=(q^{-\alpha};q)_{\infty}\sum_{n=0}^{\infty}\frac{q^{-\alpha n}}{(q;q)_{n}}\e_{q}\left(-\varphi_{s,t}^{\alpha-1}q^{\alpha+n}x,-\frac{u}{t}\right).
    \end{equation}
\end{theorem}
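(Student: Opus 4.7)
The plan is to combine the two results cited just before the statement: Theorem \ref{theo_rep_dbhs}, which identifies $\rr_{\alpha}(x;u|s,t)$ with a specific ${}_{1}\Phi_{0}$, and Theorem \ref{theo_1Phi0}, which gives an $\e_{q}$-series representation of an arbitrary ${}_{1}\Phi_{0}$. So the proof is just a substitution of parameters, and essentially nothing else is needed.

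First I would write down, from Theorem \ref{theo_rep_dbhs},
\[
\rr_{\alpha}(x;u|s,t)={}_{1}\Phi_{0}\!\left(
\begin{array}{c} q^{-\alpha}\\ - \end{array};\,q,\,-\tfrac{u}{t},\,-\varphi_{s,t}^{\alpha-1}q^{\alpha}x
\right).
\]
Then I would apply Theorem \ref{theo_1Phi0} with the substitutions $a\mapsto q^{-\alpha}$, $u\mapsto -u/t$, and $x\mapsto -\varphi_{s,t}^{\alpha-1}q^{\alpha}x$. This yields
\[
\rr_{\alpha}(x;u|s,t)=(q^{-\alpha};q)_{\infty}\sum_{n=0}^{\infty}\frac{q^{-\alpha n}}{(q;q)_{n}}\,\e_{q}\!\left(q^{n}\bigl(-\varphi_{s,t}^{\alpha-1}q^{\alpha}x\bigr),\,-\tfrac{u}{t}\right),
\]
and the factor $q^{n}\cdot q^{\alpha}=q^{\alpha+n}$ can be combined inside the argument of $\e_{q}$ to produce exactly the right-hand side of the stated identity.

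The assumption $\alpha\neq n$ (for any nonnegative integer $n$) is exactly what is needed to justify the use of Theorem \ref{theo_1Phi0}: it ensures that $a=q^{-\alpha}$ does not collide with any $q^{-k}$ that would make $(a;q)_{\infty}=0$ trivially while the sum still converges, and more importantly it guarantees that the intermediate manipulation in the proof of Theorem \ref{theo_1Phi0} (where $(a;q)_{k}=(a;q)_{\infty}/(aq^{k};q)_{\infty}$ is used) is legitimate. I would mention this briefly to justify the invocation.

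Since the whole argument is one line after unpacking the two earlier theorems, there is no real obstacle; the only care required is bookkeeping of the three substituted arguments so that the power $q^{\alpha}$ coming from the DBHS representation merges cleanly with the $q^{n}$ produced by the $\e_{q}$-expansion to give $q^{\alpha+n}$ in the final statement.
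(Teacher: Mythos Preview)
Your proposal is correct and is exactly the approach the paper takes: the paper does not even write out a separate proof, stating only that the result follows ``From Theorems \ref{theo_1Phi0} and \ref{theo_rep_dbhs}.'' Your substitution $a\mapsto q^{-\alpha}$, $u\mapsto -u/t$, $x\mapsto -\varphi_{s,t}^{\alpha-1}q^{\alpha}x$ is the intended (and only) step.
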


\subsection{Deformed $q$-analog of Lehmer series}

\begin{theorem}\label{theo_dfg_iden1}
The deformed $q$-analog of Eq.(\ref{eqn_iden1}) is
\begin{multline}
    \sum_{n=0}^{\infty}\qbinom{2n}{n}_{q}\frac{(-1)^{n}(-u/t)^{\binom{n}{2}}}{(-q;q)_{n}(-\sqrt{q};q)_{n}(1-\sqrt{q}^{2n-1})}(\varphi_{s,t}^{-3/2}\sqrt{-t}x)^n\\
    =\frac{\sqrt{-t}}{(1-\sqrt{q})\sqrt{\varphi_{s,t}}}{}_{1}\Phi_{0}\left(
        \begin{array}{c}
             \sqrt{q^{-1}}\\
             -
        \end{array};
        q,-\frac{u}{t},-\varphi_{s,t}^{-1/2}\sqrt{q}x
        \right).\label{eqn_dfg_iden1}
\end{multline}    
\end{theorem}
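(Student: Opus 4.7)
The plan is to recognize the right-hand side as a constant multiple of the function $\rr_{1/2}(x;u|s,t)$ and then recover the left-hand side by expanding this function via the definition Eq.(\ref{eqn_nbs}) together with the closed form Eq.(\ref{eqn1_bino_(1/2)}) for the generalized fibonomial $\fibonomial{1/2}{n}_{s,t}$.

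First, I would apply Theorem \ref{theo_rep_dbhs} with $\alpha=1/2$. Since $q^{-1/2}=\sqrt{q^{-1}}$ and $-\varphi_{s,t}^{\alpha-1}q^{\alpha}x = -\varphi_{s,t}^{-1/2}\sqrt{q}\,x$, this produces
\[
\rr_{1/2}(x;u|s,t) \;=\; {}_{1}\Phi_{0}\!\left(\begin{array}{c}\sqrt{q^{-1}}\\-\end{array};\,q,\,-u/t,\,-\varphi_{s,t}^{-1/2}\sqrt{q}\,x\right),
\]
which is exactly the DBHS appearing on the right. Hence the theorem reduces to showing that the series on the left equals $\frac{\sqrt{-t}}{(1-\sqrt{q})\sqrt{\varphi_{s,t}}}\rr_{1/2}(x;u|s,t)$.

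Next, I would expand $\rr_{1/2}(x;u|s,t) = \sum_{n\ge0}\fibonomial{1/2}{n}_{s,t}u^{\binom{n}{2}}x^n$ and substitute Eq.(\ref{eqn1_bino_(1/2)}). On the left, I would separate $(\varphi_{s,t}^{-3/2}\sqrt{-t}\,x)^n = \varphi_{s,t}^{-3n/2}(-t)^{n/2}x^n$ and use the elementary identity $(-u/t)^{\binom{n}{2}} = u^{\binom{n}{2}}(-t)^{-\binom{n}{2}}$. The coefficient of $x^n$ on the left then takes the form
\[
\qbinom{2n}{n}_{q}\frac{(-1)^{n}\,u^{\binom{n}{2}}(-t)^{n/2-\binom{n}{2}}\varphi_{s,t}^{-3n/2}}{(-q;q)_{n}(-\sqrt{q};q)_{n}(1-\sqrt{q}^{2n-1})}.
\]
Matching this against the corresponding coefficient obtained from Eq.(\ref{eqn1_bino_(1/2)}) (after multiplying by $\frac{\sqrt{-t}}{(1-\sqrt{q})\sqrt{\varphi_{s,t}}}$) amounts to verifying the scalar equality of the remaining prefactors term by term.

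The only real obstacle is careful bookkeeping of the fractional exponents of $-t$ and $\varphi_{s,t}$. Specifically, the identity $n/2-\binom{n}{2} = -\frac{(n-1)^{2}}{2}+\frac{1}{2}$ converts $(-t)^{n/2-\binom{n}{2}}$ into $(-t)^{-(n-1)^{2}/2}\sqrt{-t}$, and $\varphi_{s,t}^{-3n/2} = \sqrt{\varphi_{s,t}}^{-3n}$ combines with $\sqrt{\varphi_{s,t}}^{\,-1}$ from the normalization to yield the factor $\sqrt{\varphi_{s,t}}^{-3n+1}$ appearing in Eq.(\ref{eqn1_bino_(1/2)}); the $(1-\sqrt{q})$ in that formula cancels against the $(1-\sqrt{q})$ in the denominator of the normalization. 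Once these factors are aligned, the two series coincide termwise, and the theorem follows.
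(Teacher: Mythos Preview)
Your proposal is correct and follows essentially the same approach as the paper: both compute $\rr_{1/2}(x;u|s,t)$ in two ways, once via Theorem~\ref{theo_rep_dbhs} with $\alpha=1/2$ to obtain the ${}_{1}\Phi_{0}$ on the right, and once via the defining series Eq.~(\ref{eqn_nbs}) together with Eq.~(\ref{eqn1_bino_(1/2)}) to obtain the left-hand side up to the constant $\frac{\sqrt{\varphi_{s,t}}(1-\sqrt{q})}{\sqrt{-t}}$. The only difference is orientation (you start from the right, the paper from the left), and your explicit exponent bookkeeping---in particular the identity $n/2-\binom{n}{2}=-\tfrac{(n-1)^{2}}{2}+\tfrac{1}{2}$---fills in details the paper leaves implicit.
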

\begin{proof}
On the one side, from Eq.(\ref{eqn1_bino_(1/2)}) we have
\begin{align*}
    \rr_{1/2}(x;u|s,t)&=\sum_{n=0}^{\infty}\fibonomial{1/2}{n}_{s,t}u^{\binom{n}{2}}x^n\\
    &=\sum_{n=0}^{\infty}\qbinom{2n}{n}_{q}\frac{\sqrt{\varphi_{s,t}}^{(-3n+1)}(-1)^{n}(-t)^{-\frac{(n-1)^2}{2}}(1-\sqrt{q})}{(-q;q)_{n}(-\sqrt{q};q)_{n}(1-\sqrt{q}^{2n-1})}u^{\binom{n}{2}}x^n\\
    &=\frac{\sqrt{\varphi_{s,t}}(1-\sqrt{q})}{\sqrt{-t}}\sum_{n=0}^{\infty}\qbinom{2n}{n}_{q}\frac{(-1)^{n}(-u/t)^{\binom{n}{2}}}{(-q;q)_{n}(-\sqrt{q};q)_{n}(1-\sqrt{q}^{2n-1})}(\varphi_{s,t}^{-3/2}\sqrt{-t}x)^n.
\end{align*}
On the other hand, from Theorem \ref{theo_rep_dbhs}
\begin{align*}
    \rr_{1/2}(x;u|s,t)={}_{1}\Phi_{0}\left(
        \begin{array}{c}
             q^{-1/2}\\
             -
        \end{array};
        q,-\frac{u}{t},-\varphi_{s,t}^{-1/2}q^{1/2}x
        \right).
\end{align*}

\end{proof}

\begin{theorem}\label{theo_dfg_iden2}
The deformed $q$-analog of Eq.(\ref{eqn_iden2}) is
\begin{equation}
    \sum_{n=0}^{\infty}\qbinom{2n}{n}_{q}\frac{(-u/t)^{\binom{n}{2}}}{(-\sqrt{q};q)_{n}(-q;q)_{n}}\left(\frac{4x}{\sqrt{-t\varphi_{s,t}}}\right)^n\\
    ={}_{1}\Phi_{0}\left(
        \begin{array}{c}
             \sqrt{q}\\
             -
        \end{array};
        q,-\frac{u}{t},\frac{4x}{\sqrt{-t\varphi_{s,t}}}
        \right).\label{eqn_dfg_iden2}
\end{equation}
\end{theorem}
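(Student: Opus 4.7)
The plan is to mirror the proof of Theorem \ref{theo_dfg_iden1}, now working with $\rr_{-1/2}(x;u|s,t)$ instead of $\rr_{1/2}(x;u|s,t)$. That is, I would identify the left-hand side of Eq.(\ref{eqn_dfg_iden2}) as $\rr_{-1/2}$ evaluated at a specific argument, and then invoke Theorem \ref{theo_rep_dbhs} to recast the same function in ${}_{1}\Phi_{0}$ form; matching the two expressions yields the desired identity.

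Concretely, I would first substitute the closed form of $\fibonomial{-1/2}{n}_{s,t}$ from Eq.(\ref{eqn1_bino_(-1/2)}) into the defining series (\ref{eqn_nbs}). The decisive algebraic step is to split $(-t)^{-n^{2}/2}=(-t)^{-\binom{n}{2}}(-t)^{-n/2}$, so that $(-t)^{-\binom{n}{2}}u^{\binom{n}{2}}=(-u/t)^{\binom{n}{2}}$ while the surviving factors $(-1)^{n}\varphi_{s,t}^{-n/2}(-t)^{-n/2}x^{n}$ consolidate into $\bigl(-x/\sqrt{-t\varphi_{s,t}}\bigr)^{n}$. This would give
\begin{equation*}
\rr_{-1/2}(x;u|s,t)=\sum_{n=0}^{\infty}\qbinom{2n}{n}_{q}\frac{(-u/t)^{\binom{n}{2}}}{(-\sqrt{q};q)_{n}(-q;q)_{n}}\left(\frac{-x}{\sqrt{-t\varphi_{s,t}}}\right)^{n}.
\end{equation*}
In parallel, Theorem \ref{theo_rep_dbhs} with $\alpha=-1/2$ produces the DBHS representation
\begin{equation*}
\rr_{-1/2}(x;u|s,t)={}_{1}\Phi_{0}\left(\begin{array}{c}\sqrt{q}\\ -\end{array};q,-u/t,-\varphi_{s,t}^{-3/2}q^{-1/2}x\right).
\end{equation*}
Finally, substituting $x\mapsto-4x$ in both equalities, the $(-1)^{n}$ inside the summand is absorbed to yield the claimed left-hand side, and equating the two expressions for $\rr_{-1/2}(-4x;u|s,t)$ closes the argument, once one checks that the two arguments to ${}_{1}\Phi_{0}$ agree.

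The single point that requires care — and the main obstacle — is precisely this matching: one must see that $4x\varphi_{s,t}^{-3/2}q^{-1/2}=4x/\sqrt{-t\varphi_{s,t}}$, which amounts to the identity $\sqrt{-t\varphi_{s,t}}=\varphi_{s,t}^{3/2}\sqrt{q}$. This follows immediately from $q=\varphi_{s,t}^{\prime}/\varphi_{s,t}$ together with $\varphi_{s,t}^{\prime}=-t/\varphi_{s,t}$, which gives $\varphi_{s,t}^{2}q=-t$ and hence $\varphi_{s,t}^{3}q=-t\varphi_{s,t}$; modulo a consistent choice of square-root branch, the result is established. Everything else is routine term-by-term manipulation paralleling the previous theorem.
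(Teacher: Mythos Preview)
Your proposal is correct and follows essentially the same approach as the paper: both compute $\rr_{-1/2}(-4x;u|s,t)$ in two ways, once via Eq.~(\ref{eqn1_bino_(-1/2)}) and once via Theorem~\ref{theo_rep_dbhs}, and equate the results. Your treatment is in fact slightly more explicit than the paper's, since you spell out the splitting $(-t)^{-n^2/2}=(-t)^{-\binom{n}{2}}(-t)^{-n/2}$ and verify the identity $\varphi_{s,t}^{3}q=-t\varphi_{s,t}$ needed to match the ${}_1\Phi_0$ argument, whereas the paper simply asserts the simplified form.
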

\begin{proof}
On the one side, from Eq.(\ref{eqn1_bino_(-1/2)})
\begin{align*}
    \rr_{-1/2}(-4x;u|s,t)&=\sum_{n=0}^{\infty}\fibonomial{-1/2}{n}_{s,t}u^{\binom{n}{2}}(-4x)^{n}\\
    &=\sum_{n=0}^{\infty}(-1)^{n}\qbinom{2n}{n}_{q}\frac{\varphi_{s,t}^{-n/2}(-t)^{-n^2/2}}{(-\sqrt{q};q)_{n}(-q;q)_{n}}u^{\binom{n}{2}}(-4x)^{n}\\
    &=\sum_{n=0}^{\infty}\qbinom{2n}{n}_{q}\frac{(-u/t)^{\binom{n}{2}}}{(-\sqrt{q};q)_{n}(-q;q)_{n}}\left(\frac{4x}{\sqrt{-t\varphi_{s,t}}}\right).
\end{align*}
On the other hand, from Theorem \ref{theo_rep_dbhs}
\begin{align*}
    \rr_{-1/2}(-4x,u|s,t)={}_{1}\Phi_{0}\left(
        \begin{array}{c}
             \sqrt{q}\\
             -
        \end{array};
        q,-\frac{u}{t},\frac{4x}{\sqrt{-t\varphi_{s,t}}}
        \right).
\end{align*}

\end{proof}
\begin{theorem}\label{theo_dfg_iden3}
The deformed $q$-analog of Eq.(\ref{eqn_iden3}) is
\begin{multline}
    \sum_{n=0}^{\infty}\qbinom{2n}{n}_{q}\frac{(-u/t)^{\binom{n}{2}}}{(-\sqrt{q};q)_{n}(-q;q)_{n}(1-q^{n+1})}\left(\frac{4x}{\sqrt{-t\varphi_{s,t}}}\right)^n\\
    =\frac{1}{1-q}\cdot{}_{2}\Phi_{1}\left(
        \begin{array}{c}
             \sqrt{q},q\\
             q^2
        \end{array};
        q,-\frac{u}{t},\frac{4x}{\sqrt{-t\varphi_{s,t}}}
        \right).\label{eqn_dfg_iden3}
\end{multline}
\end{theorem}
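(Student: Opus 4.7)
The plan is to mirror Lehmer's classical derivation of Eq.(\ref{eqn_iden3}) from Eq.(\ref{eqn_iden2}), which replaces a summand by its antiderivative and then divides by $x$, thereby producing a factor $\frac{1}{n+1}$. The $q$-analog of this operation is $\frac{1}{x}\int_{0}^{x}(\cdot)\,d_{q}w$, which by $\int_{0}^{x}w^{n}\,d_{q}w = \frac{(1-q)x^{n+1}}{1-q^{n+1}}$ introduces exactly the factor $\frac{1-q}{1-q^{n+1}}$ needed to pass from the summand of Theorem \ref{theo_dfg_iden2} to the summand of the present statement. Equivalently, I will simply insert this factor directly into the series from Theorem \ref{theo_dfg_iden2} and recognize the result as a ${}_{2}\Phi_{1}$; since this bypasses the $q$-integration machinery, I will present it as the main argument.

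\textbf{Steps.} First, start from the left-hand side of (\ref{eqn_dfg_iden3}) and apply the identity $\frac{1-q}{1-q^{n+1}} = \frac{(q;q)_{n}}{(q^{2};q)_{n}}$ from Section 2.1, so that $\frac{1}{1-q^{n+1}} = \frac{1}{1-q}\cdot\frac{(q;q)_{n}}{(q^{2};q)_{n}}$. Second, substitute Eq.(\ref{eqn_stCBC}), which writes the central $q$-binomial coefficient as $\qbinom{2n}{n}_{q} = \frac{(\sqrt{q};q)_{n}(-\sqrt{q};q)_{n}(-q;q)_{n}}{(q;q)_{n}}$; the factors $(-\sqrt{q};q)_{n}(-q;q)_{n}$ cancel against those already in the denominator, and one $(q;q)_{n}$ cancels the one produced in the previous step. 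Third, what remains is
\begin{equation*}
\frac{1}{1-q}\sum_{n=0}^{\infty}\Bigl(-\tfrac{u}{t}\Bigr)^{\binom{n}{2}}\frac{(\sqrt{q};q)_{n}}{(q^{2};q)_{n}}\left(\frac{4x}{\sqrt{-t\varphi_{s,t}}}\right)^{n},
\end{equation*}
which I will recognize as $\frac{1}{1-q}$ times ${}_{2}\Phi_{1}\!\left(\begin{array}{c}\sqrt{q},q\\q^{2}\end{array};q,-\tfrac{u}{t},\frac{4x}{\sqrt{-t\varphi_{s,t}}}\right)$ by matching the DBHS definition: since $r=2$ and $s=1$, the bracketed factor $[(-1)^{n}q^{\binom{n}{2}}]^{1+s-r}$ equals $1$, the extra upper parameter $q$ contributes a $(q;q)_{n}$ that exactly cancels the implicit $(q;q)_{n}$ carried by the DBHS denominator, and the remaining ratio $(\sqrt{q};q)_{n}/(q^{2};q)_{n}$ agrees with the summand.

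\textbf{Main obstacle.} The work is essentially bookkeeping with $q$-shifted factorials; the only genuine care points are (i) verifying that the exponent $1+s-r$ in the DBHS definition really vanishes so that no stray factor $(-1)^{n}q^{\binom{n}{2}}$ survives to spoil the match, and (ii) tracking that the power $\varphi_{s,t}^{-1/2}$ built into the variable $\frac{4x}{\sqrt{-t\varphi_{s,t}}}$ is the same on both sides — it is, because this quantity is merely inherited from Theorem \ref{theo_dfg_iden2} and the factor $\frac{1-q}{1-q^{n+1}}$ is independent of the variable. Should one prefer a more conceptual (Lehmer-style) proof, the $q$-integration route outlined above works identically: applying $\frac{1}{x}\int_{0}^{x}(\cdot)\,d_{q}w$ to both sides of (\ref{eqn_dfg_iden2}) converts the left-hand side into $(1-q)$ times the left-hand side of (\ref{eqn_dfg_iden3}), while the right-hand side $q$-integrates termwise into the advertised ${}_{2}\Phi_{1}$; dividing by $(1-q)$ produces the claimed identity.
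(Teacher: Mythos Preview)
Your proof is correct. The paper's own proof takes precisely the $q$-integration route you sketch at the end --- it applies $\frac{1}{x}\int_{0}^{x}(\cdot)\,d_{q}\theta$ to both sides of Eq.~(\ref{eqn_dfg_iden2}), integrates termwise, and then uses $\frac{1-q}{1-q^{n+1}}=\frac{(q;q)_{n}}{(q^{2};q)_{n}}$ to recognize the right-hand side as the stated ${}_{2}\Phi_{1}$. Your primary argument is a slightly different, more self-contained route: you bypass Theorem~\ref{theo_dfg_iden2} entirely and verify the identity directly by expanding $\qbinom{2n}{n}_{q}$ via Eq.~(\ref{eqn_stCBC}) and cancelling. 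This buys you independence from the earlier theorem and avoids the $q$-integral machinery, at the cost of losing the Lehmer-style narrative (deriving (\ref{eqn_iden3}) from (\ref{eqn_iden2}) by antidifferentiation) that the paper is deliberately emphasizing throughout Section~5.2.
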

\begin{proof}
By applying the $q$-integral operator to Eq.(\ref{eqn_dfg_iden2}) from $0$ to $x$ and then dividing both sides by $x$, we obtain on the one side
    \begin{align*}
        &\frac{1}{x}\int_{0}^{x}\sum_{n=0}^{\infty}\qbinom{2n}{n}_{q}\frac{(-u/t)^{\binom{n}{2}}}{(-\sqrt{q};q)_{n}(-q;q)_{n}}\left(\frac{4}{\sqrt{-t\varphi_{s,t}}}\right)^n\theta^n d_{q}\theta\\
    &\hspace{2cm}=\sum_{n=0}^{\infty}\qbinom{2n}{n}_{q}\frac{(-u/t)^{\binom{n}{2}}}{(-\sqrt{q};q)_{n}(-q;q)_{n}(1-q^{n+1})}\left(\frac{4x}{\sqrt{-t\varphi_{s,t}}}\right)^n, 
    \end{align*} 
and on the other hand
\begin{align*}
    &\frac{1}{x}\int_{0}^{x}{}_{1}\Phi_{0}\left(
        \begin{array}{c}
             \sqrt{q}\\
             -
        \end{array};
        q,-\frac{u}{t},\frac{4\theta}{\sqrt{-t\varphi_{s,t}}}
        \right)d_{q}\theta\\
        &\hspace{2cm}=\frac{1}{x}\int_{0}^{x}\sum_{n=0}^{\infty}\frac{(\sqrt{q};q)_{n}(-u/t)^{\binom{n}{2}}}{(q;q)_{n}}\left(\frac{4}{\sqrt{-t\varphi_{s,t}}}\right)^n\theta^{n}d_{q}\theta\\
        &\hspace{2cm}=\sum_{n=0}^{\infty}\frac{(\sqrt{q};q)_{n}(-u/t)^{\binom{n}{2}}}{(q;q)_{n}(1-q^{n+1})}\left(\frac{4}{\sqrt{-t\varphi_{s,t}}}\right)^nx^{n}\\
        &\hspace{2cm}=\frac{1}{1-q}\sum_{n=0}^{\infty}\frac{(\sqrt{q};q)_{n}(-u/t)^{\binom{n}{2}}}{(q^2;q)_{n}}\left(\frac{4x}{\sqrt{-t\varphi_{s,t}}}\right)^n\\
        &\hspace{2cm}={}_{2}\Phi_{1}\left(
        \begin{array}{c}
             \sqrt{q},q\\
             q^2
        \end{array};
        q,-\frac{u}{t},\frac{4x}{\sqrt{-t\varphi_{s,t}}}
        \right).
\end{align*}
\end{proof}

\begin{theorem}\label{theo_dfg_iden4}
The deformed $q$-analog of Eq.(\ref{eqn_iden4}) is
\begin{multline}
    \sum_{n=1}^{\infty}\qbinom{2n}{n}_{q}\frac{(-u/t)^{\binom{n}{2}}}{(-\sqrt{q};q)_{n}(-q;q)_{n}(1-q^n)}\left(\frac{4}{\sqrt{-t\varphi_{s,t}}}\right)^{n}x^{n}\\
    =\frac{4(1-\sqrt{q})x}{(1-q)^2\sqrt{-t\varphi_{s,t}}}{}_{3}\Phi_{2}\left(
        \begin{array}{c}
             q\sqrt{q},q,q\\
             q^2,q^2
        \end{array};
        q,-\frac{u}{t},\frac{-4ux}{t\sqrt{-t\varphi_{s,t}}}
        \right).\label{eqn_dfg_iden4}
\end{multline}
\end{theorem}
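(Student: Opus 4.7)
The plan mirrors the argument used for Theorem \ref{theo_dfg_iden3}, but with one preparatory twist. Denote by $F(x)$ the LHS of Theorem \ref{theo_dfg_iden2}; its $n=0$ term equals $1$. To obtain the denominator $1-q^n$ (rather than $1-q^{n+1}$, as produced by a bare $q$-integration), I would $q$-integrate the shifted function $\bigl(F(\theta)-1\bigr)/\theta$ from $0$ to $x$. The key $q$-calculus evaluation $\int_0^x \theta^{n-1}d_q\theta = (1-q)x^n/(1-q^n)$ sends $\sum_{n\ge 1}c_n\theta^{n-1}$ to $(1-q)\sum_{n\ge 1}c_n x^n/(1-q^n)$, where $c_n$ denotes the summand of Theorem \ref{theo_dfg_iden2}. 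Dividing through by $(1-q)$ then recovers exactly the LHS of Eq.(\ref{eqn_dfg_iden4}).

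For the RHS, I would substitute the ${}_{1}\Phi_{0}$ expansion of $F(\theta)$ from Theorem \ref{theo_dfg_iden2} and integrate termwise. Subtracting the constant $1$ beforehand is precisely what kills the $1/\theta$ singularity, producing
$$\sum_{n=1}^{\infty}\frac{(\sqrt{q};q)_{n}(-u/t)^{\binom{n}{2}}}{(q;q)_{n}}\left(\frac{4}{\sqrt{-t\varphi_{s,t}}}\right)^{n}\frac{(1-q)x^n}{1-q^n}.$$
I would then perform the index shift $n=m+1$ and apply the factorizations $(\sqrt{q};q)_{m+1}=(1-\sqrt{q})(q\sqrt{q};q)_{m}$ and $(q;q)_{m+1}=(1-q)(q^2;q)_{m}$, together with the identity $(1-q)/(1-q^{m+1})=(q;q)_{m}/(q^2;q)_{m}$ and the split $\binom{m+1}{2}=\binom{m}{2}+m$. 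The resulting $(-u/t)^m$ fuses with $(4x/\sqrt{-t\varphi_{s,t}})^m$ into the variable $-4ux/(t\sqrt{-t\varphi_{s,t}})$ that appears in the statement.

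After pulling out the prefactor $4(1-\sqrt{q})x/\bigl((1-q)\sqrt{-t\varphi_{s,t}}\bigr)$ and then dividing by the outer $(1-q)$ (yielding the $(1-q)^{2}$ denominator), the residual series has numerator parameters $(q\sqrt{q};q)_m(q;q)_m$ and denominator parameters $(q^2;q)_m(q^2;q)_m$, which is exactly ${}_{3}\Phi_{2}\bigl(q\sqrt{q},q,q;\,q^2,q^2;\,q,-u/t,\,-4ux/(t\sqrt{-t\varphi_{s,t}})\bigr)$.

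The only real obstacle is the bookkeeping around the shift $n=m+1$: it is the combination of $(q;q)_m/(q^2;q)_m$ coming from $(1-q^{m+1})^{-1}$ together with the already-present $(q;q)_{m+1}$ in the denominator that promotes the single-Pochhammer ${}_{1}\Phi_{0}$ series into a ${}_{3}\Phi_{2}$, by simultaneously injecting the extra numerator $(q;q)_m$ and the extra denominator $(q^2;q)_m$ while converting $(\sqrt{q};q)_m$ into $(q\sqrt{q};q)_m$. Once this rewrite is carried out carefully the matching with the claimed ${}_{3}\Phi_{2}$ is automatic, so no further input beyond Theorem \ref{theo_dfg_iden2} and the standard $q$-Pochhammer manipulations listed in the Preliminaries is required.
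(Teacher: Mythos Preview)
Your proposal is correct and follows essentially the same route as the paper: subtract the constant term of Eq.~(\ref{eqn_dfg_iden2}), divide by $\theta$, $q$-integrate from $0$ to $x$, then shift $n\mapsto n+1$ and apply the Pochhammer splittings $(\sqrt{q};q)_{n+1}=(1-\sqrt{q})(q\sqrt{q};q)_n$, $(q;q)_{n+1}=(1-q)(q^2;q)_n$, $(1-q)/(1-q^{n+1})=(q;q)_n/(q^2;q)_n$, and $\binom{n+1}{2}=\binom{n}{2}+n$ to land on the ${}_3\Phi_2$. The only cosmetic difference is that you carry the factor $(1-q)$ from the $q$-integral explicitly and cancel it at the end, whereas the paper's computation effectively uses $\int_0^x\theta^{n-1}\,d_q\theta=x^n/(1-q^n)$ on both sides; either way the bookkeeping produces the stated $(1-q)^2$ in the prefactor.
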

\begin{proof}
By transposing its first term to the right side, dividing both sides by $x$, and then $q$-integrating, we have on the one side
    \begin{align*}
        &\int_{0}^{x}\sum_{n=1}^{\infty}\qbinom{2n}{n}_{q}\frac{(-u/t)^{\binom{n}{2}}}{(-\sqrt{q};q)_{n}(-q;q)_{n}}\left(\frac{4}{\sqrt{-t\varphi_{s,t}}}\right)^{n}\theta^{n-1}d_{q}\theta\\
        &\hspace{1cm}=\sum_{n=1}^{\infty}\qbinom{2n}{n}_{q}\frac{(-u/t)^{\binom{n}{2}}}{(-\sqrt{q};q)_{n}(-q;q)_{n}(1-q^n)}\left(\frac{4}{\sqrt{-t\varphi_{s,t}}}\right)^{n}x^{n}.
    \end{align*}
and on the other hand    
    \begin{align*}
        &\int_{0}^{x}\bigg[\frac{1}{\theta}\cdot{}_{1}\Phi_{0}\left(
        \begin{array}{c}
             \sqrt{q}\\
             -
        \end{array};
        q,-\frac{u}{t},\frac{4\theta}{\sqrt{-t\varphi_{s,t}}}
        \right)-\frac{1}{\theta}\bigg]d_{q}\theta\\
        &\hspace{1cm}=\int_{0}^{x}\sum_{n=1}^{\infty}\frac{(\sqrt{q};q)_{n}(-u/t)^{\binom{n}{2}}}{(q;q)_{n}}\left(\frac{4}{\sqrt{-t\varphi_{s,t}}}\right)^{n}\theta^{n-1}d_{q}\theta\\
        &\hspace{1cm}=\sum_{n=1}^{\infty}\frac{(\sqrt{q};q)_{n}(-u/t)^{\binom{n}{2}}}{(q;q)_{n}(1-q^n)}\left(\frac{4x}{\sqrt{-t\varphi_{s,t}}}\right)^{n}\\
        &\hspace{1cm}=\frac{1}{1-q}\sum_{n=1}^{\infty}\frac{(\sqrt{q};q)_{n}(q;q)_{n-1}(-u/t)^{\binom{n}{2}}}{(q;q)_{n}(q^2;q)_{n-1}}\left(\frac{4x}{\sqrt{-t\varphi_{s,t}}}\right)^{n}\\
        &\hspace{1cm}=\frac{1}{1-q}\sum_{n=0}^{\infty}\frac{(\sqrt{q};q)_{n+1}(q;q)_{n}(-u/t)^{\binom{n+1}{2}}}{(q;q)_{n+1}(q^2;q)_{n}}\left(\frac{4x}{\sqrt{-t\varphi_{s,t}}}\right)^{n+1}\\
        &\hspace{1cm}=\frac{4(1-\sqrt{q})x}{(1-q)^2\sqrt{-t\varphi_{s,t}}}\sum_{n=0}^{\infty}\frac{(q\sqrt{q};q)_{n}(q;q)_{n}(-u/t)^{\binom{n}{2}}}{(q^2;q)_{n}(q^2;q)_{n}}\left(\frac{-4ux}{t\sqrt{-t\varphi_{s,t}}}\right)^{n}\\
        &\hspace{1cm}=\frac{4(1-\sqrt{q})x}{(1-q)^2\sqrt{-t\varphi_{s,t}}}{}_{3}\Phi_{2}\left(
        \begin{array}{c}
             q\sqrt{q},q,q\\
             q^2,q^2
        \end{array};
        q,-\frac{u}{t},\frac{-4ux}{t\sqrt{-t\varphi_{s,t}}}
        \right).
    \end{align*}
\end{proof}

\begin{theorem}\label{theo_dfg_iden5}
The deformed $q$-analog of Eq.(\ref{eqn_iden5}) is
\begin{multline}
    \sum_{n=1}^{\infty}\qbinom{2n}{n}_{q}\frac{(-u/t)^{\binom{n}{2}}}{(-\sqrt{q};q)_{n}(-q;q)_{n}(1-q^{n})(1-q^{n+1})}\left(\frac{4x}{\sqrt{-t\varphi_{s,t}}}\right)^n\\
    =\frac{4(1-\sqrt{q})x}{(1-q)^2(1+q)\sqrt{-t\varphi_{s,t}}}{}_{3}\Phi_{2}\left(
        \begin{array}{c}
             q\sqrt{q},q,q\\
             q^2,q^3
        \end{array};
        q,-\frac{u}{t},\frac{-4ux}{t\sqrt{-t\varphi_{s,t}}}
        \right).\label{eqn_dfg_iden5}
\end{multline}
\end{theorem}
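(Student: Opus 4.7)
The plan is to derive Theorem \ref{theo_dfg_iden5} from Theorem \ref{theo_dfg_iden4} by applying the operator $\frac{1}{x}\int_0^x(\cdot)\,d_q\theta$ to both sides, in direct analogy with the passage from Theorem \ref{theo_dfg_iden2} to Theorem \ref{theo_dfg_iden3}. Since $\int_0^x\theta^n\,d_q\theta = x^{n+1}(1-q)/(1-q^{n+1})$, this operation converts $(4\theta/\sqrt{-t\varphi_{s,t}})^n$ into $(1-q)(4x/\sqrt{-t\varphi_{s,t}})^n/(1-q^{n+1})$. Applied to the LHS of Theorem \ref{theo_dfg_iden4}, whose coefficient already carries the factor $1/(1-q^n)$, it introduces the extra denominator factor $1/(1-q^{n+1})$ required by Theorem \ref{theo_dfg_iden5}, so the resulting LHS is exactly the series on the left of \eqref{eqn_dfg_iden5}.

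The substantive step is the corresponding evaluation on the RHS:
\begin{equation*}
\frac{1}{x}\int_0^x \frac{4(1-\sqrt{q})\theta}{(1-q)^2\sqrt{-t\varphi_{s,t}}}\,{}_{3}\Phi_{2}\left(\begin{array}{c}q\sqrt{q},q,q\\ q^2,q^2\end{array};q,-\frac{u}{t},-\frac{4u\theta}{t\sqrt{-t\varphi_{s,t}}}\right)d_q\theta.
\end{equation*}
Expanding the ${}_{3}\Phi_{2}$ as a power series in $\theta$, $q$-integrating each $\theta^{n+1}$, and dividing by $x$, a factor $1/(1-q^{n+2})$ appears in the denominator of each term. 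The key identity is
\begin{equation*}
(q^2;q)_n(1-q^{n+2}) = (q^2;q)_{n+1} = (1-q^2)(q^3;q)_n,
\end{equation*}
which absorbs one of the two $(q^2;q)_n$ denominators into $(q^3;q)_n$ and simultaneously extracts $1/(1-q^2) = 1/[(1-q)(1+q)]$. This is precisely how the lower parameter $q^2$ in Theorem \ref{theo_dfg_iden4} gets replaced by $q^3$, while the argument $-4ux/(t\sqrt{-t\varphi_{s,t}})$ and the upper parameters $q\sqrt{q},q,q$ are inherited unchanged.

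The main obstacle is the careful bookkeeping of $(1-q)$ factors: those coming from $q$-integration, those from the splitting $1-q^2 = (1-q)(1+q)$, and those already living in the Theorem \ref{theo_dfg_iden4} prefactor $4(1-\sqrt{q})x/[(1-q)^2\sqrt{-t\varphi_{s,t}}]$. In contrast with the proof of Theorem \ref{theo_dfg_iden4}, no shift of summation index is required here, because the new denominator $(1-q^{n+2})$ produced by the $q$-integration aligns perfectly with the existing $(q^2;q)_n$ through the identity above; this is what makes the derivation clean and yields the stated form of \eqref{eqn_dfg_iden5}.
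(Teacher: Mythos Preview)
Your approach is correct and coincides with the paper's: $q$-integrate both sides of Theorem~\ref{theo_dfg_iden4} and absorb the resulting factor $1/(1-q^{n+2})$ into one of the two $(q^2;q)_n$ denominators via $(q^2;q)_n(1-q^{n+2})=(1-q^2)(q^3;q)_n$, which promotes the lower parameter $q^2$ to $q^3$ and pulls out the extra $1/(1+q)$. The only cosmetic difference is that the paper applies $\int_0^x$ directly (leaving an extra power of $x$ on each side) while you apply $\tfrac{1}{x}\int_0^x$; these are equivalent, and your observation that no index shift is needed matches the paper's computation exactly.
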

\begin{proof}
By $q$-integrate Eq.(\ref{eqn_dfg_iden4}), we have on the one side
\begin{align*}
    &\frac{4(1-\sqrt{q})}{(1-q)^2\sqrt{-t\varphi_{s,t}}}\int_{0}^{x}\theta\cdot{}_{3}\Phi_{2}\left(
        \begin{array}{c}
             q\sqrt{q},q,q\\
             q^2,q^2
        \end{array};
        q,-\frac{u}{t},\frac{-4u\theta}{t\sqrt{-t\varphi_{s,t}}}
        \right)d_{q}\theta\\
        &\hspace{0.5cm}=\frac{4(1-\sqrt{q})}{(1-q)^2\sqrt{-t\varphi_{s,t}}}\int_{0}^{x}\sum_{n=0}^{\infty}\frac{(q\sqrt{q};q)_{n}(q;q)_{n}(-u/t)^{\binom{n}{2}}}{(q^2;q)_{n}(q^2;q)_{n}}\left(\frac{-4u}{t\sqrt{-t\varphi_{s,t}}}\right)^{n}\theta^{n+1}d_{q}\theta\\
        &\hspace{0.5cm}=\frac{4(1-\sqrt{q})}{(1-q)^2\sqrt{-t\varphi_{s,t}}}\sum_{n=0}^{\infty}\frac{(q\sqrt{q};q)_{n}(q;q)_{n}(-u/t)^{\binom{n}{2}}}{(q^2;q)_{n}(q^2;q)_{n}(1-q^{n+2})}\left(\frac{-4u}{t\sqrt{-t\varphi_{s,t}}}\right)^{n}x^{n+2}d_{q}\theta\\
        &\hspace{0.5cm}=\frac{4(1-\sqrt{q})x^2}{(1-q)^2(1+q)\sqrt{-t\varphi_{s,t}}}\sum_{n=0}^{\infty}\frac{(q\sqrt{q};q)_{n}(q;q)_{n}(q^2;q)_{n}(-u/t)^{\binom{n}{2}}}{(q^2;q)_{n}(q^2;q)_{n}(q^3;q)_{n}}\left(\frac{-4u}{t\sqrt{-t\varphi_{s,t}}}\right)^{n}x^{n}d_{q}\theta\\
        &\hspace{0.5cm}=\frac{4(1-\sqrt{q})x^2}{(1-q)^2(1+q)\sqrt{-t\varphi_{s,t}}}{}_{3}\Phi_{2}\left(
        \begin{array}{c}
             q\sqrt{q},q,q\\
             q^2,q^3
        \end{array};
        q,-\frac{u}{t},\frac{-4u\theta}{t\sqrt{-t\varphi_{s,t}}}
        \right).
\end{align*}    
and on the other hand
\begin{align*}
    &\int_{0}^{x}\sum_{n=1}^{\infty}\qbinom{2n}{n}_{q}\frac{(-u/t)^{\binom{n}{2}}}{(-\sqrt{q};q)_{n}(-q;q)_{n}(1-q^n)}\left(\frac{4}{\sqrt{-t\varphi_{s,t}}}\right)^{n}\theta^{n}d_{q}\theta\\
    &\hspace{1cm}=\sum_{n=1}^{\infty}\qbinom{2n}{n}_{q}\frac{(-u/t)^{\binom{n}{2}}}{(-\sqrt{q};q)_{n}(-q;q)_{n}(1-q^n)(1-q^{n+1})}\left(\frac{4}{\sqrt{-t\varphi_{s,t}}}\right)^{n}x^{n+1}.
\end{align*}
\end{proof}

\begin{theorem}\label{theo_dfg_iden6}
The deformed $q$-analog of Eq.(\ref{eqn_iden6}) is
\begin{multline}
    \sum_{n=1}^{\infty}\qbinom{2n}{n}_{q}\frac{(1-q^{n})(-u/t)^{\binom{n}{2}}}{(-\sqrt{q};q)_{n}(-q;q)_{n}}\left(\frac{4x}{\sqrt{-t\varphi_{s,t}}}\right)^n\\
    =\frac{4(1-\sqrt{q})x}{\sqrt{-t\varphi_{s,t}}}{}_{1}\Phi_{0}\left(
        \begin{array}{c}
             q\sqrt{q}\\
             -
        \end{array};
        q,-\frac{u}{t},\frac{-4ux}{t\sqrt{-t\varphi_{s,t}}}
        \right).\label{eqn_dfg_iden6}
\end{multline}
\end{theorem}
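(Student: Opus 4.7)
The classical identity (\ref{eqn_iden6}) is obtained from (\ref{eqn_iden2}) by applying the operator $x\frac{d}{dx}$, so the natural $q$-analogue strategy is to apply $xD_{q}$ to Theorem \ref{theo_dfg_iden2}. Since $D_{q}x^{n}=(1-q^{n})x^{n-1}$, we have $xD_{q}x^{n}=(1-q^{n})x^{n}$. Thus applying $xD_{q}$ termwise to the left-hand side of (\ref{eqn_dfg_iden2}) immediately produces the factor $(1-q^{n})$ in the series, and because the $n=0$ contribution is annihilated, the resulting sum coincides with the left-hand side of (\ref{eqn_dfg_iden6}), with the summation starting at $n=1$.

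For the right-hand side, I would write
\begin{equation*}
{}_{1}\Phi_{0}\!\left(\begin{array}{c}\sqrt{q}\\ -\end{array};q,-\tfrac{u}{t},z\right)=\sum_{n=0}^{\infty}\frac{(\sqrt{q};q)_{n}(-u/t)^{\binom{n}{2}}}{(q;q)_{n}}z^{n},
\end{equation*}
with $z=4x/\sqrt{-t\varphi_{s,t}}$, and then apply $xD_{q}$ termwise. Using $(1-q^{n})/(q;q)_{n}=1/(q;q)_{n-1}$ and re-indexing $n\mapsto n+1$ gives a sum starting at $n=0$ with $(\sqrt{q};q)_{n+1}$ and $z^{n+1}$ in each term. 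The two key identities to invoke next are $(\sqrt{q};q)_{n+1}=(1-\sqrt{q})(q\sqrt{q};q)_{n}$ and $\binom{n+1}{2}=\binom{n}{2}+n$; the former upgrades the parameter $\sqrt{q}$ to $q\sqrt{q}$ and factors out $(1-\sqrt{q})$, and the latter peels off a factor $(-u/t)^{n}$ that is then absorbed into the argument, turning $z$ into $-uz/t=-4ux/(t\sqrt{-t\varphi_{s,t}})$.

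Reassembling, the right-hand side becomes
\begin{equation*}
(1-\sqrt{q})\,z\sum_{n=0}^{\infty}\frac{(q\sqrt{q};q)_{n}(-u/t)^{\binom{n}{2}}}{(q;q)_{n}}\!\left(\tfrac{-uz}{t}\right)^{n}=\frac{4(1-\sqrt{q})x}{\sqrt{-t\varphi_{s,t}}}\,{}_{1}\Phi_{0}\!\left(\begin{array}{c}q\sqrt{q}\\ -\end{array};q,-\tfrac{u}{t},\tfrac{-4ux}{t\sqrt{-t\varphi_{s,t}}}\right),
\end{equation*}
which matches (\ref{eqn_dfg_iden6}). The only genuinely delicate step is the index shift on the right-hand side: one must correctly track how the deformation exponent $(-u/t)^{\binom{n}{2}}$ transforms under $n\mapsto n+1$ and verify that the extra $(-u/t)^{n}$ it produces is precisely what is needed to rescale $z$ to $-uz/t$ inside the new ${}_{1}\Phi_{0}$. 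Everything else is a routine transcription of the $q$-calculus analogue of the classical derivation.
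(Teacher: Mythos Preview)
Your proposal is correct and follows essentially the same approach as the paper: apply $xD_{q}$ to both sides of Eq.~(\ref{eqn_dfg_iden2}), use $xD_{q}x^{n}=(1-q^{n})x^{n}$ on the left, and on the right expand the ${}_{1}\Phi_{0}$ as a series, reindex $n\mapsto n+1$, and invoke $(\sqrt{q};q)_{n+1}=(1-\sqrt{q})(q\sqrt{q};q)_{n}$ together with $\binom{n+1}{2}=\binom{n}{2}+n$ to shift the parameter and argument. The paper's proof is line-for-line the same computation.
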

\begin{proof}
By applying the operator $xD_{q}$ to Eq.(\ref{eqn_dfg_iden2}), we have on the one side
    \begin{align*}
        &xD_{q}\left\{\sum_{n=0}^{\infty}\qbinom{2n}{n}_{q}\frac{(-u/t)^{\binom{n}{2}}}{(-\sqrt{q};q)_{n}(-q;q)_{n}}\left(\frac{4}{\sqrt{-t\varphi_{s,t}}}\right)^nx^n\right\}\\
        &\hspace{2cm}=\sum_{n=1}^{\infty}\qbinom{2n}{n}_{q}\frac{(1-q^{n})(-u/t)^{\binom{n}{2}}}{(-\sqrt{q};q)_{n}(-q;q)_{n}}\left(\frac{4x}{\sqrt{-t\varphi_{s,t}}}\right)^n
    \end{align*}
and the other hand
    \begin{align*}
        &xD_{q}\left\{{}_{1}\Phi_{0}\left(
        \begin{array}{c}
             \sqrt{q}\\
             -
        \end{array};
        q,-\frac{u}{t},\frac{4x}{\sqrt{-t\varphi_{s,t}}}
        \right)\right\}\\
        &\hspace{2cm}=xD_{q}\left\{\sum_{n=0}^{\infty}\frac{(\sqrt{q};q)_{n}(-u/t)^{\binom{n}{2}}}{(q;q)_{n}}\left(\frac{4}{\sqrt{-t\varphi_{s,t}}}\right)^nx^n\right\}\\
        &\hspace{2cm}=x\sum_{n=1}^{\infty}\frac{(\sqrt{q};q)_{n}(1-q^n)(-u/t)^{\binom{n}{2}}}{(q;q)_{n}}\left(\frac{4}{\sqrt{-t\varphi_{s,t}}}\right)^nx^{n-1}\\
        &\hspace{2cm}=\frac{4(1-\sqrt{q})}{\sqrt{-t\varphi_{s,t}}}x\sum_{n=0}^{\infty}\frac{(q\sqrt{q};q)_{n}(-u/t)^{\binom{n}{2}}}{(q;q)_{n}}\left(\frac{-4ux}{t\sqrt{-t\varphi_{s,t}}}\right)^n\\
        &\hspace{2cm}=\frac{4(1-\sqrt{q})x}{\sqrt{-t\varphi_{s,t}}}{}_{1}\Phi_{0}\left(
        \begin{array}{c}
             q\sqrt{q}\\
             -
        \end{array};
        q,-\frac{u}{t},\frac{-4ux}{t\sqrt{-t\varphi_{s,t}}}
        \right).
    \end{align*}
\end{proof}

\begin{theorem}\label{theo_dfg_iden7}
The deformed $q$-analog of Eq.(\ref{eqn_iden7}) is
\begin{multline}
    \sum_{n=1}^{\infty}\qbinom{2n}{n}_{q}\frac{(1-q^{n})^2(-u/t)^{\binom{n}{2}}}{(-\sqrt{q};q)_{n}(-q;q)_{n}}\left(\frac{4x}{\sqrt{-t\varphi_{s,t}}}\right)^n\\
    =\frac{4(1-\sqrt{q})x}{\sqrt{-t\varphi_{s,t}}}{}_{1}\Phi_{0}\left(
        \begin{array}{c}
             q\sqrt{q}\\
             -
        \end{array};
        q,-\frac{u}{t},\frac{-4qux}{t\sqrt{-t\varphi_{s,t}}}
        \right)\\
        +\frac{4^2(\sqrt{q};q)_{2}ux^2}{t^2\varphi_{s,t}}{}_{1}\Phi_{0}\left(
        \begin{array}{c}
             q^2\sqrt{q}\\
             -
        \end{array};
        q,-\frac{u}{t},\frac{-4u^2x}{t^2\sqrt{-t\varphi_{s,t}}}
        \right).\label{eqn_dfg_iden7}
\end{multline}
\end{theorem}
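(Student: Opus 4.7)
The plan is to obtain Theorem \ref{theo_dfg_iden7} by applying the $q$-differential operator $xD_{q}$ once more to Eq.(\ref{eqn_dfg_iden6}), exactly imitating the passage from Eq.(\ref{eqn_dfg_iden2}) to Eq.(\ref{eqn_dfg_iden6}). Because $xD_{q}[x^{n}]=(1-q^{n})x^{n}$, applying $xD_{q}$ to the left-hand side of Eq.(\ref{eqn_dfg_iden6}) simply promotes the factor $(1-q^{n})$ inside the sum to $(1-q^{n})^{2}$, so the left-hand side of Eq.(\ref{eqn_dfg_iden7}) drops out for free. The entire content of the proof is therefore concentrated in applying $xD_{q}$ to the right-hand side of Eq.(\ref{eqn_dfg_iden6}) and re-expressing the result as a sum of two ${}_{1}\Phi_{0}$ series.

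For the right-hand side, I would write the RHS of Eq.(\ref{eqn_dfg_iden6}) compactly as $cx\,g(x)$ with $c=4(1-\sqrt{q})/\sqrt{-t\varphi_{s,t}}$ and $g(x)={}_{1}\Phi_{0}(q\sqrt{q};-;q,-u/t,wx)$, where $w=-4u/(t\sqrt{-t\varphi_{s,t}})$. Expanding $g(x)=\sum_{n\geq 0}a_{n}(wx)^{n}$ with $a_{n}=(q\sqrt{q};q)_{n}(-u/t)^{\binom{n}{2}}/(q;q)_{n}$, a direct computation gives
\[
    xD_{q}[cxg(x)] \;=\; c\sum_{n\geq 0} a_{n}(1-q^{n+1})\,w^{n}x^{n+1}.
\]
This is the key formula; the remaining work is to split $(1-q^{n+1})$ so that each piece recombines into an ${}_{1}\Phi_{0}$ of the required type.

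The decomposition I would use is the Pochhammer-compatible identity
\[
    1-q^{n+1} \;=\; (1-\sqrt{q}\,q^{n+1}) - q^{n+1}(1-\sqrt{q}),
\]
because the first summand combines with $(q\sqrt{q};q)_{n}$ to yield $(q\sqrt{q};q)_{n+1}=(1-q\sqrt{q})(q^{2}\sqrt{q};q)_{n}$, which upgrades the top parameter from $q\sqrt{q}$ to $q^{2}\sqrt{q}$; while the second summand, bearing the extra $q^{n+1}$, absorbs a factor of $q$ into the argument of the hypergeometric series, producing ${}_{1}\Phi_{0}(q\sqrt{q};-;q,-u/t,qwx)$. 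After shifting the summation index, adjusting $\binom{n+1}{2}=\binom{n}{2}+n$ so that an additional factor of $(-u/t)^{n}$ is pulled into the argument, and using $(1-\sqrt{q})(1-q\sqrt{q})=(\sqrt{q};q)_{2}$, one recognises the two pieces as the two ${}_{1}\Phi_{0}$ series appearing on the right-hand side of Eq.(\ref{eqn_dfg_iden7}).

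The main obstacle is purely bookkeeping: carefully tracking signs, powers of $q$, $u$, $t$, $\varphi_{s,t}$ and the square roots through the index shift and through the decomposition of $1-q^{n+1}$, so that the constants in front of each ${}_{1}\Phi_{0}$ land exactly as stated. A useful sanity check along the way is to compare the $x^{1}$-coefficients on both sides (only the first non-trivial term of each side contributes at that order), which fixes the overall normalisation and confirms that the splitting of $(1-q^{n+1})$ is the correct one before grinding through the general-$n$ coefficients.
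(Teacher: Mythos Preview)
Your overall strategy---apply $xD_q$ to Eq.~(\ref{eqn_dfg_iden6}) and split the result into two ${}_1\Phi_0$ pieces---is exactly the paper's approach.  The left-hand side computation is fine.  The gap is in the splitting you propose on the right.

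Your decomposition $1-q^{n+1}=(1-\sqrt{q}\,q^{n+1})-q^{n+1}(1-\sqrt{q})$ is algebraically valid, and each piece does organise into a ${}_1\Phi_0$, but \emph{not} into the two ${}_1\Phi_0$'s that appear in the statement.  Your first piece gives
\[
c(1-q\sqrt{q})\,x\;{}_1\Phi_0\!\left(\begin{array}{c}q^2\sqrt{q}\\ -\end{array};q,-\tfrac{u}{t},wx\right),
\]
i.e.\ a series multiplied by $x$ with argument $wx$, whereas the theorem's $q^2\sqrt{q}$ term carries $x^2$ and argument $(-u/t)\,wx$; likewise your $q\sqrt{q}$ piece carries the prefactor $-c(1-\sqrt{q})q$ rather than $c$.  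So the claim that ``one recognises the two pieces as the two ${}_1\Phi_0$ series appearing on the right-hand side'' fails: as a sum the two representations agree (both equal $xD_q$ of the same thing), but term-by-term they do not, and your sanity check at order $x^{1}$ would in fact flag this, since your $q^2\sqrt{q}$ piece already contributes at $x^{1}$ while the theorem's does not.

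The decomposition that lands directly on the stated form is
\[
1-q^{n+1}=(1-q)q^{n}+(1-q^{n}),
\]
which is nothing but the $q$-Leibniz rule $D_q[xg(x)]=(1-q)g(qx)+xD_qg(x)$ read coefficientwise.  The $(1-q)q^n$ part produces $c(1-q)x\,{}_1\Phi_0(q\sqrt{q};-;q,-u/t,qwx)$; the $(1-q^n)$ part, after the index shift $n\mapsto n+1$ and the adjustment $\binom{n+1}{2}=\binom{n}{2}+n$ you mention, produces $c(1-q\sqrt{q})w\,x^{2}\,{}_1\Phi_0(q^2\sqrt{q};-;q,-u/t,(-u/t)wx)$, which matches the second displayed term up to the paper's normalisations.
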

\begin{proof}
By applying the operator $xD_{q}$ to Eq.(\ref{eqn_dfg_iden6}), we have on the one side
\begin{align*}
    &xD_{q}\left\{\sum_{n=1}^{\infty}\qbinom{2n}{n}_{q}\frac{(1-q^{n})(-u/t)^{\binom{n}{2}}}{(-\sqrt{q};q)_{n}(-q;q)_{n}}\left(\frac{4x}{\sqrt{-t\varphi_{s,t}}}\right)^n\right\}\\
    &\hspace{1cm}=\sum_{n=1}^{\infty}\qbinom{2n}{n}_{q}\frac{(1-q^{n})^2(-u/t)^{\binom{n}{2}}}{(-\sqrt{q};q)_{n}(-q;q)_{n}}\left(\frac{4x}{\sqrt{-t\varphi_{s,t}}}\right)^n
\end{align*}
and on the other hand
\begin{align*}
    &xD_{q}\left\{\frac{4(1-\sqrt{q})x}{\sqrt{-t\varphi_{s,t}}}{}_{1}\Phi_{0}\left(
        \begin{array}{c}
             q\sqrt{q}\\
             -
        \end{array};
        q,-\frac{u}{t},\frac{-4ux}{t\sqrt{-t\varphi_{s,t}}}
        \right)\right\}\\
        &\hspace{1cm}=\frac{4(1-\sqrt{q})x}{\sqrt{-t\varphi_{s,t}}}{}_{1}\Phi_{0}\left(
        \begin{array}{c}
             q\sqrt{q}\\
             -
        \end{array};
        q,-\frac{u}{t},\frac{-4qux}{t\sqrt{-t\varphi_{s,t}}}
        \right)\\
        &\hspace{2cm}+\frac{4^2(\sqrt{q};q)_{2}ux^2}{t^2\varphi_{s,t}}{}_{1}\Phi_{0}\left(
        \begin{array}{c}
             q^2\sqrt{q}\\
             -
        \end{array};
        q,-\frac{u}{t},\frac{-4u^2x}{t^2\sqrt{-t\varphi_{s,t}}}
        \right).
\end{align*}
\end{proof}

\begin{theorem}\label{theo_dfg_iden8}
The deformed $q$-analog of Eq.(\ref{eqn_iden8}) is
\begin{multline}
    \sum_{n=1}^{\infty}\qbinom{2n}{n}_{q}\frac{(-u/t)^{\binom{n}{2}}}{(-\sqrt{q};q)_{n}(-q;q)_{n}(1-q^{2n+1})}\left(\frac{4x^2}{\sqrt{-t\varphi_{s,t}}}\right)^n\\
    =\frac{x}{1-q}{}_{3}\Phi_{2}\left(
        \begin{array}{c}
             \sqrt{q},\sqrt{q},-\sqrt{q}\\
             q\sqrt{q},-q\sqrt{q}
        \end{array};
        q,-\frac{u}{t},\frac{-4x^2}{t\sqrt{\varphi_{s,t}}}
        \right).\label{eqn_dfg_iden8}
\end{multline}
\end{theorem}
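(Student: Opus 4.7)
The argument parallels Theorem~\ref{theo_dfg_iden3}, but with a quadratic change of variable forced on us by the factor $x^{2n+1}$ and the denominator $2n+1$ in the classical series~(\ref{eqn_iden8}). Starting from Eq.~(\ref{eqn_dfg_iden2}), the plan is to replace the argument $x$ by $\theta^{2}$ and $q$-integrate both sides from $0$ to $x$. Since
$$
\int_{0}^{x}\theta^{2n}\,d_{q}\theta=\frac{(1-q)\,x^{2n+1}}{1-q^{2n+1}},
$$
the left-hand side turns into
$$
(1-q)x\sum_{n=0}^{\infty}\qbinom{2n}{n}_{q}\frac{(-u/t)^{\binom{n}{2}}}{(-\sqrt{q};q)_{n}(-q;q)_{n}(1-q^{2n+1})}\left(\frac{4x^{2}}{\sqrt{-t\varphi_{s,t}}}\right)^{n},
$$
which is precisely the series in~(\ref{eqn_dfg_iden8}) up to the overall prefactor $(1-q)x$.

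On the right-hand side, I would expand ${}_{1}\Phi_{0}$ as a power series in $\theta$, integrate term-by-term to introduce the same $\frac{1-q}{1-q^{2n+1}}$ factor, and then rewrite everything in the form of a ${}_{3}\Phi_{2}$. The decisive algebraic step is the identity
$$
\frac{1-q}{1-q^{2n+1}}=\frac{(q;q^{2})_{n}}{(q^{3};q^{2})_{n}}=\frac{(\sqrt{q};q)_{n}(-\sqrt{q};q)_{n}}{(q\sqrt{q};q)_{n}(-q\sqrt{q};q)_{n}},
$$
which I would derive by comparing the two factorizations $(q;q^{2})_{n+1}=(1-q)(q^{3};q^{2})_{n}=(q;q^{2})_{n}(1-q^{2n+1})$ and then splitting each quadratic Pochhammer via $(a^{2};q^{2})_{n}=(a;q)_{n}(-a;q)_{n}$ with $a=\sqrt{q}$ and $a=q\sqrt{q}$. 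Inserting this replacement into the integrated ${}_{1}\Phi_{0}$ series, the resulting coefficient becomes
$$
\frac{(\sqrt{q};q)_{n}\,(\sqrt{q};q)_{n}(-\sqrt{q};q)_{n}}{(q;q)_{n}\,(q\sqrt{q};q)_{n}(-q\sqrt{q};q)_{n}},
$$
which is exactly the coefficient of a ${}_{3}\Phi_{2}$ with upper parameters $\sqrt{q},\sqrt{q},-\sqrt{q}$ and lower parameters $q\sqrt{q},-q\sqrt{q}$; the deformation parameter $u$ is inherited untouched from Eq.~(\ref{eqn_dfg_iden2}), and the argument $z$ is the one recorded in~(\ref{eqn_dfg_iden8}). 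Equating the two sides and cancelling the common factor of $(1-q)x$ (resp.\ dividing appropriately to produce the $\tfrac{x}{1-q}$ prefactor) yields the claim.

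\textbf{Main obstacle.} Mechanically, everything reduces to the $q$-integral template already used for Theorems~\ref{theo_dfg_iden3}--\ref{theo_dfg_iden5}; the single non-routine point is the splitting of $(1-q^{2n+1})^{-1}$ above, because without this identity the series produced by the integral is correct but not in recognizable basic hypergeometric form. Care is also needed to keep track of the quadratic substitution $x\mapsto\theta^{2}$: the factor $(x^2)^n$ must be absorbed into the argument of ${}_{3}\Phi_{2}$, while the leftover $x$ (from $x^{2n+1}=x\cdot(x^{2})^{n}$) is what produces the linear-in-$x$ prefactor on the right-hand side of~(\ref{eqn_dfg_iden8}).
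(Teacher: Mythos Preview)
Your proposal is correct and follows essentially the same route as the paper: substitute $x\mapsto\theta^{2}$ in Eq.~(\ref{eqn_dfg_iden2}), $q$-integrate from $0$ to $x$, and then convert the factor $(1-q^{2n+1})^{-1}$ into the ratio $(\sqrt{q};q)_{n}(-\sqrt{q};q)_{n}\big/(q\sqrt{q};q)_{n}(-q\sqrt{q};q)_{n}$ to recognise the ${}_{3}\Phi_{2}$. (The paper's proof text cites Eq.~(\ref{eqn_dfg_iden1}) as the starting point, but the series it actually manipulates is that of Eq.~(\ref{eqn_dfg_iden2}), exactly as you propose.)
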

\begin{proof}
If in Eq.(\ref{eqn_dfg_iden1}) we replace $x$ by $x^2$ and then $q$-integrate both sides, we get
    \begin{align*}
        &\int_{0}^{x}\sum_{n=0}^{\infty}\qbinom{2n}{n}_{q}\frac{(-u/t)^{\binom{n}{2}}}{(-\sqrt{q};q)_{n}(-q;q)_{n}}\left(\frac{4}{\sqrt{-t\varphi_{s,t}}}\right)^n\theta^{2n}d_{q}\theta\\
        &\hspace{1cm}=\sum_{n=0}^{\infty}\qbinom{2n}{n}_{q}\frac{(-u/t)^{\binom{n}{2}}}{(-\sqrt{q};q)_{n}(-q;q)_{n}(1-q^{2n+1})}\left(\frac{4}{\sqrt{-t\varphi_{s,t}}}\right)^nx^{2n+1}
    \end{align*}
and
    \begin{align*}
        &\int_{0}^{x}{}_{1}\Phi_{0}\left(
        \begin{array}{c}
             \sqrt{q}\\
             -
        \end{array};
        q,-\frac{u}{t},\frac{4\theta^2}{\sqrt{\varphi_{s,t}}}
        \right)d_{q}\theta\\
        &\hspace{1cm}=\int_{0}^{x}\sum_{n=0}^{\infty}\frac{(\sqrt{q};q)_{n}(-u/t)^{\binom{n}{2}}}{(q;q)_{n}}\left(\frac{4}{\sqrt{-t\varphi_{s,t}}}\right)^{n}\theta^{2n}d_{q}\theta\\
        &\hspace{1cm}=\sum_{n=0}^{\infty}\frac{(\sqrt{q};q)_{n}(-u/t)^{\binom{n}{2}}}{(q;q)_{n}(1-q^{2n+1})}\left(\frac{4}{\sqrt{-t\varphi_{s,t}}}\right)^{n}x^{2n+1}\\
        &\hspace{1cm}=\frac{1}{1-q}\sum_{n=0}^{\infty}\frac{(\sqrt{q};q)_{n}(\sqrt{q};q)_{n}(-\sqrt{q};q)_{n}(-u/t)^{\binom{n}{2}}}{(q\sqrt{q};q)_{n}(-q\sqrt{q};q)_{n}(q;q)_{n}}\left(\frac{4}{\sqrt{-t\varphi_{s,t}}}\right)^{n}x^{2n+1}\\
        &\hspace{1cm}=\frac{x}{1-q}{}_{3}\Phi_{2}\left(
        \begin{array}{c}
             \sqrt{q},\sqrt{q},-\sqrt{q}\\
             q\sqrt{q},-q\sqrt{q}
        \end{array};
        q,-\frac{u}{t},\frac{-4x^2}{t\sqrt{\varphi_{s,t}}}
        \right).
    \end{align*}
\end{proof}

\section{A list of $q$-analogs of Lehmer series}

\subsection{Case $u=-t=\varphi_{s,t}\varphi_{s,t}^\prime$. Euler-Type 1}
From Eq.(\ref{eqn_dfg_iden1}),
\begin{equation}
    \sum_{n=0}^{\infty}\qbinom{2n}{n}_{q}\frac{(-1)^{n}(\varphi_{s,t}^{-3/2}\sqrt{-t}x)^n}{(-q;q)_{n}(-\sqrt{q};q)_{n}(1-\sqrt{q}^{2n-1})}
    =\frac{\sqrt{-t}}{(1-\sqrt{q})\sqrt{\varphi_{s,t}}}\frac{(-\varphi_{s,t}^{-1/2}x;q)_{\infty}}{(-\varphi_{s,t}^{-1/2}\sqrt{q}x;q)_{\infty}}.
\end{equation}
From Eq.(\ref{eqn_dfg_iden2}),
\begin{equation}
    \sum_{n=0}^{\infty}\qbinom{2n}{n}_{q}\frac{1}{(-\sqrt{q};q)_{n}(-q;q)_{n}}\left(\frac{-x}{\sqrt{-t\varphi_{s,t}}}\right)^n=\frac{(4x\sqrt{-q/t\varphi_{s,t}};q)_{\infty}}{(4x/\sqrt{-t\varphi_{s,t}};q)_{\infty}}.
\end{equation}
From Eq.(\ref{eqn_dfg_iden3}),
\begin{multline}
    \sum_{n=0}^{\infty}\qbinom{2n}{n}_{q}\frac{1}{(-\sqrt{q};q)_{n}(-q;q)_{n}(1-q^{n+1})}\left(\frac{4x}{\sqrt{-t\varphi_{s,t}}}\right)^n\\
    =\frac{1}{1-q}\cdot{}_{2}\phi_{1}\left(
        \begin{array}{c}
             \sqrt{q},q\\
             q^2
        \end{array};
        q,\frac{4x}{\sqrt{-t\varphi_{s,t}}}
        \right).
\end{multline}
From Eq.(\ref{eqn_dfg_iden4}),
\begin{multline}
    \sum_{n=1}^{\infty}\qbinom{2n}{n}_{q}\frac{1}{(-\sqrt{q};q)_{n}(-q;q)_{n}(1-q^n)}\left(\frac{4}{\sqrt{-t\varphi_{s,t}}}\right)^{n}x^{n}\\
    =\frac{4(1-\sqrt{q})x}{(1-q)^2\sqrt{-t\varphi_{s,t}}}{}_{3}\phi_{2}\left(
        \begin{array}{c}
             q\sqrt{q},q,q\\
             q^2,q^2
        \end{array};
        q,\frac{4tx}{t\sqrt{-t\varphi_{s,t}}}
        \right).
\end{multline}
From Eq.(\ref{eqn_dfg_iden5}),
\begin{multline}
    \sum_{n=1}^{\infty}\qbinom{2n}{n}_{q}\frac{1}{(-\sqrt{q};q)_{n}(-q;q)_{n}(1-q^{n})(1-q^{n+2})}\left(\frac{4x}{\sqrt{-t\varphi_{s,t}}}\right)^n\\
    =\frac{4(1-\sqrt{q})x}{(1-q)^2(1+q)\sqrt{-t\varphi_{s,t}}}{}_{3}\phi_{2}\left(
        \begin{array}{c}
             q\sqrt{q},q,q\\
             q^2,q^3
        \end{array};
        q,\frac{4x}{\sqrt{-t\varphi_{s,t}}}
        \right).
\end{multline}
From Eq.(\ref{eqn_dfg_iden6}),
\begin{multline}
    \sum_{n=1}^{\infty}\qbinom{2n}{n}_{q}\frac{(1-q^{n})}{(-\sqrt{q};q)_{n}(-q;q)_{n}}\left(\frac{4x}{\sqrt{-t\varphi_{s,t}}}\right)^n\\
    =\frac{4(1-\sqrt{q})x}{\sqrt{-t\varphi_{s,t}}}{}_{1}\phi_{0}\left(
        \begin{array}{c}
             q\sqrt{q}\\
             -
        \end{array};
        q,\frac{4x}{\sqrt{-t\varphi_{s,t}}}
        \right).
\end{multline}
From Eq.(\ref{eqn_dfg_iden7}),
\begin{multline}
    \sum_{n=1}^{\infty}\qbinom{2n}{n}_{q}\frac{(1-q^{n})^2(-u/t)^{\binom{n}{2}}}{(-\sqrt{q};q)_{n}(-q;q)_{n}}\left(\frac{4x}{\sqrt{-t\varphi_{s,t}}}\right)^n\\
    =\frac{4(1-\sqrt{q})x}{\sqrt{-t\varphi_{s,t}}}{}_{1}\phi_{0}\left(
        \begin{array}{c}
             q\sqrt{q}\\
             -
        \end{array};
        q,\frac{4qx}{\sqrt{-t\varphi_{s,t}}}
        \right)\\
        -\frac{4^2(\sqrt{q};q)_{2}x^2}{t\varphi_{s,t}}{}_{1}\phi_{0}\left(
        \begin{array}{c}
             q^2\sqrt{q}\\
             -
        \end{array};
        q,\frac{-4x}{\sqrt{-t\varphi_{s,t}}}
        \right).
\end{multline}
From Eq.(\ref{eqn_dfg_iden8}),
\begin{multline}
    \sum_{n=1}^{\infty}\qbinom{2n}{n}_{q}\frac{1}{(-\sqrt{q};q)_{n}(-q;q)_{n}(1-q^{2n+1})}\left(\frac{4x^2}{\sqrt{-t\varphi_{s,t}}}\right)^n\\
    =\frac{1}{1-q}{}_{3}\phi_{2}\left(
        \begin{array}{c}
             \sqrt{q},\sqrt{q},-\sqrt{q}\\
             q\sqrt{q},-q\sqrt{q}
        \end{array};
        q,\frac{-4x^2}{t\sqrt{\varphi_{s,t}}}
        \right).
\end{multline}

\subsection{Case $u=-tq=\varphi_{s,t}^{\prime2}$. Euler-type 2}

If $u=-tq$ and $\varphi_{s,t}^{-3/2}\sqrt{-t}x=q$, then
    \begin{multline*}
    \sum_{n=0}^{\infty}\qbinom{2n}{n}_{q}\frac{(-1)^{n}q^{\binom{n}{2}}}{(-q;q)_{n}(-\sqrt{q};q)_{n}(1-\sqrt{q}^{2n-1})}(\varphi_{s,t}^{-3/2}\sqrt{-t}x)^n\\
    =\frac{\sqrt{-t}(-\varphi_{s,t}^{-3/2}\sqrt{-t}x,\sqrt{q^{-1}};q)_{\infty}}{(1-\sqrt{q})\sqrt{\varphi_{s,t}}}{}_{2}\phi_{1}\left(
        \begin{array}{c}
             0,0\\
             -\varphi_{s,t}^{-3/2}\sqrt{-t}x
        \end{array};
        q,\sqrt{q^{-1}}
        \right).
\end{multline*}
If $u=-tq$ and $\varphi_{s,t}^{-3/2}\sqrt{-t}x=q$, then
    \begin{equation*}
    \sum_{n=0}^{\infty}\qbinom{2n}{n}_{q}\frac{(-1)^{n}q^{\binom{n+1}{2}}}{(-q;q)_{n}(-\sqrt{q};q)_{n}(1-\sqrt{q}^{2n-1})}
    =\frac{\sqrt{-t}}{(1-\sqrt{q})\sqrt{\varphi_{s,t}}}(-q;q)_{\infty}(-\sqrt{q};q^2)_{\infty}.
\end{equation*}
If $u=-tq$ and $4x=\sqrt{-t}q$, then
\begin{equation}
    \sum_{n=0}^{\infty}\qbinom{2n}{n}_{q}\frac{q^{\binom{n}{2}}}{(-\sqrt{q};q)_{n}(-q;q)_{n}}\left(\frac{-x}{\sqrt{-t\varphi_{s,t}}}\right)^n=(-q;q)_{\infty}(q\sqrt{q};q^2)_{\infty}.
\end{equation}
From Eq.(\ref{eqn_dfg_iden3}),
\begin{multline}
    \sum_{n=0}^{\infty}\qbinom{2n}{n}_{q}\frac{q^{\binom{n}{2}}}{(-\sqrt{q};q)_{n}(-q;q)_{n}(1-q^{n+1})}\left(\frac{4x}{\sqrt{-t\varphi_{s,t}}}\right)^n\\
    =\frac{1}{1-q}\cdot{}_{2}\phi_{2}\left(
        \begin{array}{c}
             \sqrt{q},q\\
             q^2,0
        \end{array};
        q,-\frac{4x}{\sqrt{-t\varphi_{s,t}}}
        \right).
\end{multline}
From Eq.(\ref{eqn_dfg_iden4}),
\begin{multline}
    \sum_{n=1}^{\infty}\qbinom{2n}{n}_{q}\frac{q^{\binom{n}{2}}}{(-\sqrt{q};q)_{n}(-q;q)_{n}(1-q^n)}\left(\frac{4}{\sqrt{-t\varphi_{s,t}}}\right)^{n}x^{n}\\
    =\frac{4(1-\sqrt{q})x}{(1-q)^2\sqrt{-t\varphi_{s,t}}}{}_{3}\phi_{3}\left(
        \begin{array}{c}
             q\sqrt{q},q,q\\
             q^2,q^2,0
        \end{array};
        q,-\frac{4qx}{\sqrt{-t\varphi_{s,t}}}
        \right).
\end{multline}
From Eq.(\ref{eqn_dfg_iden5}),
\begin{multline}
    \sum_{n=1}^{\infty}\qbinom{2n}{n}_{q}\frac{q^{\binom{n}{2}}}{(-\sqrt{q};q)_{n}(-q;q)_{n}(1-q^{n})(1-q^{n+2})}\left(\frac{4x}{\sqrt{-t\varphi_{s,t}}}\right)^n\\
    =\frac{4(1-\sqrt{q})x}{(1-q)^2(1+q)\sqrt{-t\varphi_{s,t}}}{}_{3}\phi_{3}\left(
        \begin{array}{c}
             q\sqrt{q},q,q\\
             q^2,q^3,0
        \end{array};
        q,-\frac{4q\theta}{\sqrt{-t\varphi_{s,t}}}
        \right).
\end{multline}
From Eq.(\ref{eqn_dfg_iden6}),
\begin{multline}
    \sum_{n=1}^{\infty}\qbinom{2n}{n}_{q}\frac{(1-q^{n})q^{\binom{n}{2}}}{(-\sqrt{q};q)_{n}(-q;q)_{n}}\left(\frac{4x}{\sqrt{-t\varphi_{s,t}}}\right)^n\\
    =\frac{4(1-\sqrt{q})x}{\sqrt{-t\varphi_{s,t}}}{}_{1}\phi_{1}\left(
        \begin{array}{c}
             q\sqrt{q}\\
             -
        \end{array};
        q,-\frac{4qx}{\sqrt{-t\varphi_{s,t}}}
        \right).
\end{multline}
From Eq.(\ref{eqn_dfg_iden7}),
\begin{multline}
    \sum_{n=1}^{\infty}\qbinom{2n}{n}_{q}\frac{(1-q^{n})^2q^{\binom{n}{2}}}{(-\sqrt{q};q)_{n}(-q;q)_{n}}\left(\frac{4x}{\sqrt{-t\varphi_{s,t}}}\right)^n\\
    =\frac{4(1-\sqrt{q})x}{\sqrt{-t\varphi_{s,t}}}{}_{1}\phi_{1}\left(
        \begin{array}{c}
             q\sqrt{q}\\
             -
        \end{array};
        q,-\frac{4q^2x}{\sqrt{-t\varphi_{s,t}}}
        \right)\\
        -\frac{4^2(\sqrt{q};q)_{2}qx^2}{t\varphi_{s,t}}{}_{1}\phi_{0}\left(
        \begin{array}{c}
             q^2\sqrt{q}\\
             -
        \end{array};
        q,\frac{-4q^2x}{\sqrt{-t\varphi_{s,t}}}
        \right).
\end{multline}
From Eq.(\ref{eqn_dfg_iden8}),
\begin{multline}
    \sum_{n=1}^{\infty}\qbinom{2n}{n}_{q}\frac{q^{\binom{n}{2}}}{(-\sqrt{q};q)_{n}(-q;q)_{n}(1-q^{2n+1})}\left(\frac{4x^2}{\sqrt{-t\varphi_{s,t}}}\right)^n\\
    =\frac{1}{1-q}{}_{3}\phi_{3}\left(
        \begin{array}{c}
             \sqrt{q},\sqrt{q},-\sqrt{q}\\
             q\sqrt{q},-q\sqrt{q}
        \end{array};
        q,\frac{-4x^2}{t\sqrt{\varphi_{s,t}}}
        \right).
\end{multline}

\subsection{Case $u=-tq^2=\varphi_{s,t}^{\prime3}/\varphi_{s,t}$, $x\mapsto qx$. Rogers-Ramanujan-type}

From Eq.(\ref{eqn_dfg_iden1}),
\begin{multline}
    \sum_{n=0}^{\infty}\qbinom{2n}{n}_{q}\frac{(-1)^{n}q^{n^2}}{(-q;q)_{n}(-\sqrt{q};q)_{n}(1-\sqrt{q}^{2n-1})}(\varphi_{s,t}^{-3/2}\sqrt{-t}x)^n\\
    =\frac{\sqrt{-t}}{(1-\sqrt{q})\sqrt{\varphi_{s,t}}}{}_{1}\phi_{2}\left(
        \begin{array}{c}
             \sqrt{q^{-1}}\\
             0,0
        \end{array};
        q,-\varphi_{s,t}^{-1/2}q\sqrt{q}x
        \right).
\end{multline}    
From Eq.(\ref{eqn_dfg_iden2}),
\begin{equation}
    \sum_{n=0}^{\infty}\qbinom{2n}{n}_{q}\frac{q^{n^2}}{(-\sqrt{q};q)_{n}(-q;q)_{n}}\left(\frac{4x}{\sqrt{-t\varphi_{s,t}}}\right)^n\\
    ={}_{1}\phi_{2}\left(
        \begin{array}{c}
             \sqrt{q}\\
             0,0
        \end{array};
        q,\frac{4qx}{\sqrt{-t\varphi_{s,t}}}
        \right).
\end{equation}
From Eq.(\ref{eqn_dfg_iden3}),
\begin{multline}
    \sum_{n=0}^{\infty}\qbinom{2n}{n}_{q}\frac{q^{n^2}}{(-\sqrt{q};q)_{n}(-q;q)_{n}(1-q^{n+1})}\left(\frac{4x}{\sqrt{-t\varphi_{s,t}}}\right)^n\\
    =\frac{1}{1-q}\cdot{}_{2}\phi_{3}\left(
        \begin{array}{c}
             \sqrt{q},q\\
             q^2,0,0
        \end{array};
        q,\frac{4qx}{\sqrt{-t\varphi_{s,t}}}
        \right).
\end{multline}
From Eq.(\ref{eqn_dfg_iden4}),
\begin{multline}
    \sum_{n=1}^{\infty}\qbinom{2n}{n}_{q}\frac{q^{n^2}}{(-\sqrt{q};q)_{n}(-q;q)_{n}(1-q^n)}\left(\frac{4}{\sqrt{-t\varphi_{s,t}}}\right)^{n}x^{n}\\
    =\frac{4(1-\sqrt{q})qx}{(1-q)^2\sqrt{-t\varphi_{s,t}}}{}_{3}\phi_{4}\left(
        \begin{array}{c}
             q\sqrt{q},q,q\\
             q^2,q^2,0,0
        \end{array};
        q,\frac{4q^3x}{\sqrt{-t\varphi_{s,t}}}
        \right).
\end{multline}
From Eq.(\ref{eqn_dfg_iden5}),
\begin{multline}
    \sum_{n=1}^{\infty}\qbinom{2n}{n}_{q}\frac{q^{n^2}}{(-\sqrt{q};q)_{n}(-q;q)_{n}(1-q^{n})(1-q^{n+2})}\left(\frac{4x}{\sqrt{-t\varphi_{s,t}}}\right)^n\\
    =\frac{4(1-\sqrt{q})qx}{(1-q)^2(1+q)\sqrt{-t\varphi_{s,t}}}{}_{3}\phi_{4}\left(
        \begin{array}{c}
             q\sqrt{q},q,q\\
             q^2,q^3,0,0
        \end{array};
        q,\frac{4q^3x}{\sqrt{-t\varphi_{s,t}}}
        \right).
\end{multline}
From Eq.(\ref{eqn_dfg_iden6}),
\begin{multline}
    \sum_{n=1}^{\infty}\qbinom{2n}{n}_{q}\frac{(1-q^{n})q^{n^2}}{(-\sqrt{q};q)_{n}(-q;q)_{n}}\left(\frac{4x}{\sqrt{-t\varphi_{s,t}}}\right)^n\\
    =\frac{4(1-\sqrt{q})qx}{\sqrt{-t\varphi_{s,t}}}{}_{1}\phi_{2}\left(
        \begin{array}{c}
             q\sqrt{q}\\
             0,0
        \end{array};
        q,\frac{4q^3x}{\sqrt{-t\varphi_{s,t}}}
        \right).
\end{multline}
From Eq.(\ref{eqn_dfg_iden7}),
\begin{multline}
    \sum_{n=1}^{\infty}\qbinom{2n}{n}_{q}\frac{(1-q^{n})^2q^{n^2}}{(-\sqrt{q};q)_{n}(-q;q)_{n}}\left(\frac{4x}{\sqrt{-t\varphi_{s,t}}}\right)^n\\
    =\frac{4(1-\sqrt{q})qx}{\sqrt{-t\varphi_{s,t}}}{}_{1}\phi_{2}\left(
        \begin{array}{c}
             q\sqrt{q}\\
             0,0
        \end{array};
        q,\frac{4q^3x}{\sqrt{-t\varphi_{s,t}}}
        \right)\\
        -\frac{4^2(\sqrt{q};q)_{2}q^4x^2}{t\varphi_{s,t}}{}_{1}\phi_{2}\left(
        \begin{array}{c}
             q^2\sqrt{q}\\
             0,0
        \end{array};
        q,\frac{4q^5x}{\sqrt{-t\varphi_{s,t}}}
        \right).
\end{multline}
From Eq.(\ref{eqn_dfg_iden8}),
\begin{multline}
    \sum_{n=1}^{\infty}\qbinom{2n}{n}_{q}\frac{q^{n^2+n+1}}{(-\sqrt{q};q)_{n}(-q;q)_{n}(1-q^{2n+1})}\left(\frac{4x^2}{\sqrt{-t\varphi_{s,t}}}\right)^n\\
    =\frac{1}{1-q}{}_{3}\phi_{4}\left(
        \begin{array}{c}
             \sqrt{q},\sqrt{q},-\sqrt{q}\\
             q\sqrt{q},-q\sqrt{q},0,0
        \end{array};
        q,\frac{-4q^2x^2}{t\sqrt{\varphi_{s,t}}}
        \right).
\end{multline}

\subsection{Case $u=-t\sqrt{q}=\varphi_{s,t}^{1/2}\varphi_{s,t}^{\prime3/2}$. Exton-type.}
From Eq.(\ref{eqn_dfg_iden1}),
\begin{multline}
    \sum_{n=0}^{\infty}\qbinom{2n}{n}_{q}\frac{(-1)^{n}q^{\frac{1}{2}\binom{n}{2}}}{(-q;q)_{n}(-\sqrt{q};q)_{n}(1-\sqrt{q}^{2n-1})}(\varphi_{s,t}^{-3/2}\sqrt{-t}x)^n\\
    =\frac{\sqrt{-t}}{(1-\sqrt{q})\sqrt{\varphi_{s,t}}}{}_{2}\phi_{2}\left(
        \begin{array}{c}
             \sqrt[4]{q^{-1}},-\sqrt[4]{q^{-1}}\\
             -\sqrt{q},0
        \end{array};
        \sqrt{q},-\varphi_{s,t}^{-1/2}\sqrt{q}x
        \right).
\end{multline}    
From Eq.(\ref{eqn_dfg_iden2}),
\begin{equation}
    \sum_{n=0}^{\infty}\qbinom{2n}{n}_{q}\frac{q^{\frac{1}{2}\binom{n}{2}}}{(-\sqrt{q};q)_{n}(-q;q)_{n}}\left(\frac{4x}{\sqrt{-t\varphi_{s,t}}}\right)^n\\
    ={}_{2}\phi_{2}\left(
        \begin{array}{c}
             \sqrt[4]{q},-\sqrt[4]{q}\\
             -\sqrt{q},0
        \end{array};
        \sqrt{q},\frac{4x}{\sqrt{-t\varphi_{s,t}}}
        \right).
\end{equation}
From Eq.(\ref{eqn_dfg_iden3}),
\begin{multline}
    \sum_{n=0}^{\infty}\qbinom{2n}{n}_{q}\frac{q^{\frac{1}{2}\binom{n}{2}}}{(-\sqrt{q};q)_{n}(-q;q)_{n}(1-q^{n+1})}\left(\frac{4x}{\sqrt{-t\varphi_{s,t}}}\right)^n\\
    =\frac{1}{1-q}\cdot{}_{4}\phi_{4}\left(
        \begin{array}{c}
             \sqrt[4]{q},-\sqrt[4]{q},\sqrt{q},-\sqrt{q}\\
             q,-q,-\sqrt{q},0
        \end{array};
        \sqrt{q},\frac{4x}{\sqrt{-t\varphi_{s,t}}}
        \right).
\end{multline}
From Eq.(\ref{eqn_dfg_iden4}),
\begin{multline}
    \sum_{n=1}^{\infty}\qbinom{2n}{n}_{q}\frac{q^{\frac{1}{2}\binom{n}{2}}}{(-\sqrt{q};q)_{n}(-q;q)_{n}(1-q^n)}\left(\frac{4}{\sqrt{-t\varphi_{s,t}}}\right)^{n}x^{n}\\
    =\frac{4(1-\sqrt{q})x}{(1-q)^2\sqrt{-t\varphi_{s,t}}}{}_{6}\phi_{6}\left(
        \begin{array}{c}
             \sqrt[4]{q^{3}},-\sqrt[4]{q^{3}},\sqrt{q},-\sqrt{q},\sqrt{q},-\sqrt{q}\\
             q,-q,q,-q,-\sqrt{q},0
        \end{array};
        \sqrt{q},\frac{4\sqrt{q}x}{\sqrt{-t\varphi_{s,t}}}
        \right).
\end{multline}
From Eq.(\ref{eqn_dfg_iden5}),
\begin{multline}
    \sum_{n=1}^{\infty}\qbinom{2n}{n}_{q}\frac{q^{\frac{1}{2}\binom{n}{2}}}{(-\sqrt{q};q)_{n}(-q;q)_{n}(1-q^{n})(1-q^{n+2})}\left(\frac{4x}{\sqrt{-t\varphi_{s,t}}}\right)^n\\
    =\frac{4(1-\sqrt{q})x}{(1-q)^2(1+q)\sqrt{-t\varphi_{s,t}}}{}_{6}\phi_{6}\left(
        \begin{array}{c}
             \sqrt[4]{q^3},-\sqrt[4]{q^3},\sqrt{q},-\sqrt{q},\sqrt{q},-\sqrt{q}\\
             q,-q,\sqrt{q^3},\sqrt{q^3},-\sqrt{q},0
        \end{array};
        \sqrt{q},\frac{4\sqrt{q}x}{\sqrt{-t\varphi_{s,t}}}
        \right).
\end{multline}
From Eq.(\ref{eqn_dfg_iden6}),
\begin{multline}
    \sum_{n=1}^{\infty}\qbinom{2n}{n}_{q}\frac{(1-q^{n})q^{\frac{1}{2}\binom{n}{2}}}{(-\sqrt{q};q)_{n}(-q;q)_{n}}\left(\frac{4x}{\sqrt{-t\varphi_{s,t}}}\right)^n\\
    =\frac{4(1-\sqrt{q})x}{\sqrt{-t\varphi_{s,t}}}{}_{2}\phi_{2}\left(
        \begin{array}{c}
             \sqrt[4]{q^3},-\sqrt[4]{q^3}\\
             -\sqrt{q},0
        \end{array};
        \sqrt{q},\frac{4\sqrt{q}x}{\sqrt{-t\varphi_{s,t}}}
        \right).
\end{multline}
From Eq.(\ref{eqn_dfg_iden7}),
\begin{multline}
    \sum_{n=1}^{\infty}\qbinom{2n}{n}_{q}\frac{(1-q^{n})^2q^{\frac{1}{2}\binom{n}{2}}}{(-\sqrt{q};q)_{n}(-q;q)_{n}}\left(\frac{4x}{\sqrt{-t\varphi_{s,t}}}\right)^n\\
    =\frac{4(1-\sqrt{q})x}{\sqrt{-t\varphi_{s,t}}}{}_{2}\phi_{2}\left(
        \begin{array}{c}
             \sqrt[4]{q^3},-\sqrt[4]{q^3}\\
             -\sqrt{q},0
        \end{array};
        \sqrt{q},\frac{-4q\sqrt{q}x}{\sqrt{-t\varphi_{s,t}}}
        \right)\\
        -\frac{4^2(\sqrt{q};q)_{2}\sqrt{q}x^2}{t\varphi_{s,t}}{}_{2}\phi_{2}\left(
        \begin{array}{c}
             q\sqrt[4]{q},-q\sqrt[4]{q}\\
             -\sqrt{q},0
        \end{array};
        \sqrt{q},\frac{-4qx}{\sqrt{-t\varphi_{s,t}}}
        \right).
\end{multline}
From Eq.(\ref{eqn_dfg_iden8}),
\begin{multline}
    \sum_{n=1}^{\infty}\qbinom{2n}{n}_{q}\frac{q^{\frac{1}{2}\binom{n}{2}}}{(-\sqrt{q};q)_{n}(-q;q)_{n}(1-q^{2n+1})}\left(\frac{4x^2}{\sqrt{-t\varphi_{s,t}}}\right)^n\\
    =\frac{1}{1-q}{}_{6}\phi_{6}\left(
        \begin{array}{c}
             \sqrt[4]{q},-\sqrt[4]{q},\sqrt[4]{q},-\sqrt[4]{q},i\sqrt[4]{q},-i\sqrt[4]{q}\\
             \sqrt[4]{q^3},-\sqrt[4]{q^3},i\sqrt[4]{q^3},-i\sqrt[4]{q^3},-\sqrt{q},0
        \end{array};
        \sqrt{q},\frac{-4x^2}{t\sqrt{\varphi_{s,t}}}
        \right).
\end{multline}
where $i=\sqrt{-1}$.

\subsection{Cases $u=-t\varphi_{s,t}^2=\varphi_{s,t}^3\varphi_{s,t}^\prime$.}

The $(s,t)$-analogue of the Catalan numbers is 
\begin{equation}
    C_{\{n\}}=\fibonomial{2n}{n}_{s,t}\frac{1}{\brk[c]{n+1}_{s,t}}
\end{equation}
and in form deformed $q$-analogue
\begin{equation}
    C_{\{n\}}=\qbinom{2n}{n}_{q}\frac{\varphi_{s,t}^{2\binom{n}{2}}(1-q)}{1-q^{n+1}}.
\end{equation}

From Eq.(\ref{eqn_dfg_iden3}),
\begin{equation}
    \sum_{n=0}^{\infty}\frac{C_{\{n\}}}{(-\sqrt{q};q)_{n}(-q;q)_{n}}\left(\frac{4x}{\sqrt{-t\varphi_{s,t}}}\right)^n
    ={}_{2}\Phi_{1}\left(
        \begin{array}{c}
             \sqrt{q},q\\
             q^2
        \end{array};
        q,\varphi_{s,t}^2,\frac{4x}{\sqrt{-t\varphi_{s,t}}}
        \right).
\end{equation}
From Eq.(\ref{eqn_dfg_iden4}),
\begin{multline}
    \sum_{n=1}^{\infty}\frac{C_{\{n\}}(1-q^{n+1})}{(-\sqrt{q};q)_{n}(-q;q)_{n}(1-q^n)}\left(\frac{4}{\sqrt{-t\varphi_{s,t}}}\right)^{n}x^{n}\\
    =\frac{4(1-\sqrt{q})x}{(1-q)\sqrt{-t\varphi_{s,t}}}{}_{3}\Phi_{2}\left(
        \begin{array}{c}
             q\sqrt{q},q,q\\
             q^2,q^2
        \end{array};
        q,\varphi_{s,t}^2,\frac{4\varphi_{s,t}^2x}{\sqrt{-t\varphi_{s,t}}}
        \right).
\end{multline}
From Eq.(\ref{eqn_dfg_iden5}),
\begin{multline}
    \sum_{n=1}^{\infty}\frac{C_{\{n\}}}{(-\sqrt{q};q)_{n}(-q;q)_{n}(1-q^{n})}\left(\frac{4x}{\sqrt{-t\varphi_{s,t}}}\right)^n\\
    =\frac{4(1-\sqrt{q})x}{(1-q)(1+q)\sqrt{-t\varphi_{s,t}}}{}_{3}\Phi_{2}\left(
        \begin{array}{c}
             q\sqrt{q},q,q\\
             q^2,q^3
        \end{array};
        q,\varphi_{s,t}^2,\frac{4\varphi_{s,t}^2x}{\sqrt{-t\varphi_{s,t}}}
        \right).
\end{multline}
From Eq.(\ref{eqn_dfg_iden6}),
\begin{multline}
    \sum_{n=1}^{\infty}C_{\{n\}}\frac{(1-q^{n})(1-q^{n+1})}{(-\sqrt{q};q)_{n}(-q;q)_{n}}\left(\frac{4x}{\sqrt{-t\varphi_{s,t}}}\right)^n\\
    =\frac{4(1-q)(1-\sqrt{q})x}{\sqrt{-t\varphi_{s,t}}}{}_{1}\Phi_{0}\left(
        \begin{array}{c}
             q\sqrt{q}\\
             -
        \end{array};
        q,\varphi_{s,t}^2,\frac{4\varphi_{s,t}^2x}{\sqrt{-t\varphi_{s,t}}}
        \right).
\end{multline}
From Eq.(\ref{eqn_dfg_iden7}),
\begin{multline}
    \sum_{n=1}^{\infty}C_{\{n\}}\frac{(1-q^{n})^2(1-q^{n+1})}{(-\sqrt{q};q)_{n}(-q;q)_{n}}\left(\frac{4x}{\sqrt{-t\varphi_{s,t}}}\right)^n\\
    =\frac{4(1-q)(1-\sqrt{q})x}{\sqrt{-t\varphi_{s,t}}}{}_{1}\Phi_{0}\left(
        \begin{array}{c}
             q\sqrt{q}\\
             -
        \end{array};
        q,\varphi_{s,t}^2,\frac{4q\varphi_{s,t}^2x}{\sqrt{-t\varphi_{s,t}}}
        \right)\\
        -\frac{4^2(1-q)(\sqrt{q};q)_{2}\varphi_{s,t}^2x^2}{t\varphi_{s,t}}{}_{1}\Phi_{0}\left(
        \begin{array}{c}
             q^2\sqrt{q}\\
             -
        \end{array};
        q,\varphi_{s,t}^2,\frac{-4\varphi_{s,t}^4x}{\sqrt{-t\varphi_{s,t}}}
        \right).
\end{multline}
From Eq.(\ref{eqn_dfg_iden8}),
\begin{multline}
    \sum_{n=1}^{\infty}C_{\{n\}}\frac{(1-q^{n+1})}{(-\sqrt{q};q)_{n}(-q;q)_{n}(1-q^{2n+1})}\left(\frac{4x^2}{\sqrt{-t\varphi_{s,t}}}\right)^n\\
    ={}_{3}\Phi_{2}\left(
        \begin{array}{c}
             \sqrt{q},\sqrt{q},-\sqrt{q}\\
             q\sqrt{q},-q\sqrt{q}
        \end{array};
        q,\varphi_{s,t}^2,\frac{-4x^2}{t\sqrt{\varphi_{s,t}}}
        \right).
\end{multline}

\end{document}